\documentclass[12pt]{article}
\usepackage{geometry}                
\geometry{letterpaper}                   
\usepackage{graphicx}
\usepackage{amssymb, enumerate}
\DeclareGraphicsRule{.tif}{png}{.png}{`convert #1 `dirname #1`/`basename #1 .tif`.png}

\usepackage{amsmath}
\usepackage{amssymb}
\usepackage{amsthm}
\usepackage{float}
\usepackage{kantlipsum, hyperref}
\newtheorem{theorem}{Theorem}[section]
\newtheorem{lemma}[theorem]{Lemma}
\newtheorem{corollary}[theorem]{Corollary}

\newtheorem{remark}[theorem]{Remark}

\newtheorem{conjecture}[theorem]{Conjecture}

\newtheorem{definition}[theorem]{Definition}

\newcommand{\In}{\textnormal{In}}
\newcommand{\Out}{\textnormal{Out}}
\newcommand{\dist}{\textnormal{ood}}
\newcommand{\disti}{\textnormal{oid}}
\newcommand{\vin}{V_\text{in}}
\newcommand{\vout}{V_\text{out}}

\title{Antidirected subgraphs of oriented graphs}
\author{Maya Stein\thanks{MS is affiliated to the Center for Mathematical Modeling and the Department of Mathematical Engineering of the University of Chile, and acknowledges support by ANID Fondecyt Regular Grant 1221905, by FAPESP-ANID Investigaci\'on Conjunta grant 2019/13364-7, by RandNET (RISE project H2020-EU.1.3.3) and by ANID PIA CMM FB210005.} \ and Camila Zárate-Guerén\thanks{CZG is affiliated to the University of Birmingham and acknowledges support by the University of Birmingham, by RandNET (RISE project H2020-EU.1.3.3) and by ANID PIA CMM FB210005.}}

\date{}                                           

\begin{document}
\maketitle

\begin{abstract}
We show that 
for every $\eta>0$  every sufficiently large 
$n$-vertex oriented graph $D$ of minimum semi\-degree exceeding $(1+\eta)\frac k2$ contains  every  balanced  antidirected tree  with $k$ edges and bounded maximum degree, if $k\ge\eta n$. In particular, this asymptotically  confirms  a conjecture of the first author for long antidirected paths and dense digraphs.
\\
Further, we show that in the same setting, $D$ contains  every $k$-edge anti\-directed subdivision of a sufficiently small complete graph, if the paths of the subdivision that have length $1$ or $2$ span a forest. As a special case, we can find all antidirected cycles of length at most  $k$.
\\
Finally, we address a  conjecture of Addario-Berry, Havet, Linhares Sales, Reed and Thomass\'e for antidirected trees in digraphs. We show that this conjecture is asymptotically true
in $n$-vertex oriented graphs for all balanced antidirected trees of bounded maximum degree and of size linear in $n$.
\end{abstract}

\section{Introduction}

A typical question in extremal graph theory is whether  information  on the degree sequence of a graph $G$ can be used to find some specific subgraph in $G$. A famous example  is Dirac's theorem~\cite{dirac}, which states that any $n$-vertex graph $G$  of minimum degree $\delta (G)\ge \frac{n}{2}$ contains a spanning cycle.  
A  minimum degree of at least $\frac{n-1}{2}$  guarantees a spanning path. Such a result also holds for shorter paths: Already
Dirac, as well as Erdős and Gallai, observed that
if a graph~$G$ has a connected component with at least $k+1$ vertices and $\delta(G)\geq \frac k2$,  then $G$ contains a $k$-edge path (see~\cite{erdosgallai}). Note that we need to 
require that $G$ has a sufficiently large component as otherwise 
$G$ could be  the disjoint union of complete graphs of order $k$.

Considering the same question for oriented graphs $D$, we replace the  minimum degree with the minimum semidegree $\delta^0(D)$, which is defined as the minimum over all the in- and all the out-degrees of the vertices. 
Note that any oriented graph of minimum semidegree $\delta^0(D) > \frac k2$ has an underlying connected graph on more than $k$ vertices (and therefore, no extra condition on large components will be necessary). 
Jackson \cite{bjackson} 
showed that every oriented graph~$D$ with $\delta^0(D) > \frac k2$ contains the directed path on $k$ edges, which is best possible.
The first author conjectured that this result  extends to any orientation of the path.

\begin{conjecture}$\!\!${\rm\bf\cite{survey}}
\label{conjp}
Every oriented graph $D$ with $\delta^0(D) > \frac k2$ contains every oriented path on $k$ edges.
\end{conjecture}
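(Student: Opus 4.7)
The plan is to attack Conjecture~\ref{conjp} by decomposing the target oriented path $P$ according to its direction pattern and embedding each piece via a method tailored to that piece. Write $P$ as a concatenation of maximal directed segments $P_1,\dots,P_t$ meeting at ``turn'' vertices; the case $t=1$ is Jackson's theorem \cite{bjackson}, while the fully antidirected case (each $P_i$ a single edge) is exactly the regime in which the main result advertised in the abstract gives an asymptotic answer. For intermediate $t$, my first step is to classify $P$ as either nearly-directed (few long segments) or nearly-antidirected (mostly short segments) and, in each regime, adapt the proof strategy of the corresponding extreme.

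Concretely, if $P$ has only $O(1)$ maximal directed runs each of length $\Omega(k)$, I would apply Jackson's method iteratively: find a directed subpath $P_1'\subseteq D$ of the right length whose endpoint has many in- or out-neighbours (depending on the orientation of the next turn), then restart inside $D$ minus the interior of $P_1'$ to embed $P_2$, and so on. Only $O(1)$ restarts occur, so the semidegree condition degrades by $O(1)$ losses per restart, which is absorbable since $\delta^0(D)>k/2$. For the nearly-antidirected regime I would instead combine the antidirected machinery of the paper's main theorem with a short patching step handling the few short directed subruns greedily at each turn vertex, using that every vertex has more than $k/2$ in- and out-neighbours available.

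The main obstacle is the mixed regime, where medium-length directed runs and long antidirected stretches appear interleaved, so neither the iterative directed argument nor the antidirected embedding applies cleanly. Here one has to prove an absorbing lemma adapted to oriented paths: a small set $A\subseteq V(D)$ of flexible vertices capable of completing any partial embedding of $P$, regardless of which direction pattern is currently required, provided the partial embedding leaves the right number of unused vertices with the right in/out neighbourhoods. Building such an $A$ under the tight bound $\delta^0(D)>k/2$, rather than the slack bound $(1+\eta)k/2$, is the step I expect to be the hardest; an asymptotic version should be accessible by fusing the methods developed in the present paper with an oriented regularity lemma, but removing $\eta$ will likely require a stability theorem classifying the extremal oriented graphs close to $\delta^0(D)=k/2$ (typically blow-ups of short tournaments and disjoint unions of regular oriented graphs on $k+1$ vertices) and ruling each of them out on a case-by-case basis against the given pattern of $P$.
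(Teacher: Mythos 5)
This is a \emph{conjecture}, not a theorem: the paper does not prove it, and neither do you. The paper only establishes the asymptotic, antidirected special case (Corollary~\ref{coro1}, derived from Theorem~\ref{teo:teo1}), which requires the extra slack $\delta^0(D)>(1+\eta)\frac k2$, a lower bound $k\ge\eta n$, and large $n$. Your proposal is an outline of a research programme, and you yourself flag the key step as ``the step I expect to be the hardest.'' That is a candid and reasonable assessment, but it means there is no proof here to assess against anything.

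Beyond that, there is a concrete gap in the one part you present as routine. For the ``nearly-directed'' regime you propose to apply Jackson's theorem to find a directed segment $P_1'$ of length $\Omega(k)$, delete its interior, and iterate, claiming the semidegree only ``degrades by $O(1)$ losses per restart.'' This is false: deleting the interior of a directed segment of length $\ell$ removes $\ell-1$ vertices, and each remaining vertex's in- and out-degree can drop by as much as $\ell-1$. Starting from $\delta^0(D)>\frac k2$, after removing $\ell-1$ vertices you are only guaranteed $\delta^0 > \frac k2-(\ell-1)$, while Jackson's theorem for the next segment of length $k-\ell$ would need $\delta^0>\frac{k-\ell}{2}=\frac k2-\frac{\ell}{2}$. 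For $\ell\ge 3$ the former bound is weaker than the latter, and for $\ell=\Omega(k)$ the deficit is $\Omega(k)$, which is not an absorbable constant. The threshold $\frac k2$ is exactly tight for Jackson's theorem, so there is no slack to spend; this is precisely why the conjecture does not follow by naive iteration and why the paper has to work with $(1+\eta)\frac k2$ and a regularity/antimatching framework rather than a greedy restart argument. Your mixed-regime ``absorbing lemma'' and the proposed stability classification of extremal oriented graphs near $\delta^0(D)=\frac k2$ are plausible directions, but they are unproven and would require substantial new ideas beyond what this paper supplies.
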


This conjecture is best possible for odd $k$, as a regular tournament on $k$ vertices does not contain any oriented path with $k$ edges. A different example works for  {\it antidirected paths}, i.e.~oriented paths whose edges alternate directions (more generally, an {\it anti\-directed graph} is an oriented graph having no $2$-edge directed path).
Note that  in an $\ell$-blow-up of the directed triangle (where each vertex is replaced with $\ell$ independent vertices), any   longest antidirected path can only cover $2\ell$ vertices. 

It is known~\cite{antipaths} that if $\delta^0(D)> \frac{3k-2}{4}$, then the oriented graph $D$ contains each antipath with $k$ edges. Further, if  we replace the bound with $\delta^0(D) \ge k$, 
Conjecture~\ref{conjp} becomes very easy (a greedy embedding strategy suffices). 
This strategy   also works if we wish to find a $k$-edge oriented tree instead of an $k$-edge oriented path. Moreover, in this case we cannot do any better: In the $(k-1)$-blow-up of a directed triangle, each vertex has semidegree $k-1$, but no antidirected star with $k$ edges is present. However,  the antidirected star is  very unbalanced. The situation might be different for  {\it balanced} antidirected trees $T$, i.e.~those that have as many vertices of in-degree $0$ as  of out-degree $0$. We suspect that  these trees appear whenever the host oriented graph $D$ has   minimum semidegree greater than $\frac k2$.

We show that this is asymptotically  true   if $D$ is dense and $T$ is large and has bounded  maximum degree, where the {\it maximum degree} $\Delta(T)$ of an oriented tree $T$ is defined as the maximum degree of the underlying undirected tree. 

\begin{theorem}
\label{teo:teo1}
For all $\eta\in(0,1)$ and $c\in \mathbb N$ there is $n_0$ such that for all $n\geq n_0$ and $k\geq \eta n$,  every oriented graph $D$ on $n$ vertices with $\delta^0 (D)>(1 + \eta)\frac k2$ contains  every balanced antidirected tree $T$ with $k$ edges and with $\Delta(T)\leq (\log(n))^c$.
\end{theorem}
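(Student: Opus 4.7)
The approach I would take combines a directed Szemer\'edi regularity lemma with the bipartite structure of antidirected trees. Apply the oriented regularity lemma to $D$ with parameters $\varepsilon\ll d\ll\eta$, producing a regular partition $V_0,V_1,\ldots,V_m$ and a reduced oriented graph $R$ on $[m]$, with an arc $i\to j$ whenever the pair $(V_i,V_j)$ is $\varepsilon$-regular of oriented density at least $d$ in that direction. A standard calculation inherits $\delta^0(R)\geq(1+\eta/2)m/2$, so $R$ still satisfies a Dirac-type condition on a bounded number of vertices. The upper bound $\Delta(T)\leq(\log n)^c$ is tuned to the range in which an oriented bipartite blow-up lemma can be applied inside cluster pairs.

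Since $T$ is balanced and antidirected, $V(T)=\vin(T)\cup\vout(T)$ with $|\vin(T)|=|\vout(T)|\approx k/2$ and every edge of $T$ is oriented from $\vout(T)$ to $\vin(T)$. Hence embedding $T$ into $D$ reduces to embedding its underlying bipartite tree into a bipartite oriented subgraph of $D$ whose edges all flow in one prescribed direction. My next step is to find in $R$ an \emph{antidirected skeleton} $P=U_1W_1U_2W_2\cdots U_sW_s$, i.e.\ a long antidirected path or cycle of clusters with $U_i\to W_i$ and $U_{i+1}\to W_i$, whose total capacity $\sum_i(|U_i|+|W_i|)$ comfortably exceeds $k$. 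Applied inside $R$, the $3\ell/4$ antipath result cited in the introduction already delivers such a skeleton of length at least about $2m/3$, amply enough whenever $k$ is bounded away from $n$.

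Next, decompose $T$ by cutting a small number of well-chosen edges along a long path of $T$, obtaining bounded-size subtrees $T_1,\ldots,T_s$ with $|\vin(T_i)|\approx|U_i|$ and $|\vout(T_i)|\approx|W_i|$, respecting both the global balance and the local balance dictated by the cluster sizes. Embed each $T_i$ into the super-regularised pair $(U_i,W_i)$ via the oriented blow-up lemma, and link consecutive embeddings through the antidirected edges $U_{i+1}W_i$ of $P$. A small absorbing gadget, reserved at the outset, then handles the exceptional set $V_0$ together with any residual in/out imbalance.

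The main obstacle is the near-extremal regime $k\to n$: there the skeleton $P$ must cover almost every cluster of $R$, and one cannot simply invoke Conjecture~\ref{conjp} inside $R$. A separate structural argument — exploiting that $|R|$ is a constant (so brute-force case analysis is available), the exact balance of $T$, and perhaps a direct dense-case argument when $D$ itself is very dense — will be required here. A related subtlety is matching the decomposition of $T$ to the cluster sizes $(|U_i|,|W_i|)$ in/out-exactly, so that no vertex is wasted; this is precisely what the absorber is designed to repair.
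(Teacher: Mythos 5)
There is a genuine quantitative gap in your skeleton-finding step, and it affects the whole range of $k$, not merely the near-extremal regime you flag. After regularisation, the reduced oriented graph $R$ on $m$ clusters (each of size about $n/m$) has minimum semidegree roughly $\delta_R\approx(1+\eta)\frac{km}{2n}$, \emph{not} $(1+\eta/2)m/2$; the latter holds only when $k\approx n$. Feeding $\delta_R$ into the $\frac{3\ell-2}{4}$ antipath result yields an antipath in $R$ of length at most about $\frac{4\delta_R}{3}\approx\frac{2(1+\eta)km}{3n}$. Multiplying by the cluster size $n/m$, the capacity of your skeleton is about $\frac{2(1+\eta)k}{3}$, which is strictly less than $k$ for every $\eta<\frac12$. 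So the antipath skeleton never has enough room for $T$; this is a constant-factor shortfall, independent of $k/n$. You cannot fix it by invoking anything stronger for antipaths inside $R$ either, since $R$ is itself a general oriented graph under the same hypothesis, so you would be presupposing Conjecture~\ref{conjp}.

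The paper circumvents precisely this by replacing the antipath skeleton with a \emph{connected antimatching}: a matching $M=\{a_ib_i\}$ in $R$ with all $a_i$ reachable from $a_1$ by antidirected walks (Definition~\ref{def-antimatching}). Lemma~\ref{l5} shows that minimum semidegree $\ge t$ already forces a connected antimatching of size $t$ — a $1{:}1$ ratio between degree and skeleton size, as opposed to the $3{:}4$ ratio the antipath result can deliver. Such an $M$ covers $2t$ clusters of total capacity about $(1+\eta)k>k$, which is comfortably sufficient. Lemma~\ref{l6} additionally bounds the out-out-distance between the antimatching edges, so short connecting antiwalks exist. The tree is then cut into \emph{small} pieces by a $\beta$-decomposition (Lemma~\ref{l8}) rather than along a long path of $T$ — this is more robust (a high-degree tree need not have a convenient long path, and the $\beta$-decomposition gives uniformly small pieces that can be reshuffled), the pieces are assigned to matching edges via the packing Lemma~\ref{l10}, and the first $O(t)$ levels of each piece are embedded along a connecting antiwalk with the rest placed inside the assigned matching edge. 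The connected antimatching idea is the essential device your outline is missing; without it the antipath route has a capacity deficit that no absorber can repair.
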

 Instead of  Theorem~\ref{teo:teo1} we will prove a slightly stronger version, namely Theorem~\ref{teo:teo1'}. This is necessary for a later application of the result in the proof of Theorem~\ref{ES} below. Theorem~\ref{teo:teo1'}  allows us to specify a subset of $V(D)$ where the root of $T$ will be mapped, a feature that might be of independent interest.
 
Theorem~\ref{teo:teo1}  can  be interpreted as a version  for smaller trees  of a recent result by Kathapurkar and Montgomery~\cite{km} (and the preceding result in~\cite{mn}). In~\cite{km} it is shown that for each $\eta > 0$, there is some $c > 0$  such that every sufficiently large $n$-vertex directed graph with minimum semidegree at least $(\frac 12 + \eta)n$ contains a copy of every $n$-vertex oriented tree whose underlying maximum degree is at most $c\frac n{\log n}$. This generalises a well-known theorem of Koml\'os, S\'ark\"ozy and Szemer\'edi~\cite{kss} for graphs. The result from~\cite{km} cannot hold for smaller trees and accordingly smaller semidegree  without adding further conditions, because of the example of the antidirected star from above.

Turning back to our original question for oriented paths, note that Theorem~\ref{teo:teo1} immediately implies that Conjecture \ref{conjp} holds asymptotically for large antidirected paths in large graphs. (If the path is not balanced, we can extend it by one vertex, and apply Theorem~\ref{teo:teo1} with a sufficiently smaller $\eta$.)

\begin{corollary}
\label{coro1}
For all $\eta\in(0,1)$  there is $n_0$ such that for all $n\geq n_0$ and $k\geq \eta n$ every oriented graph $D$ on $n$ vertices with $\delta^0 (D)>(1 + \eta)\frac k2$ contains  every  antidirected path with $k$ edges.
\end{corollary}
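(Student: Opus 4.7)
The plan is to derive Corollary~\ref{coro1} as an almost immediate consequence of Theorem~\ref{teo:teo1}, following the hint in the footnote. The key observation is that, parameterising $P$ as $v_0 v_1 \dots v_k$ with alternating edge orientations, a vertex $v_i$ is a source (all incident edges outgoing) if and only if $i$ has a fixed parity. Hence when $k$ is odd, $P$ is balanced (there are $(k+1)/2$ of each type), while when $k$ is even, one type has $k/2 + 1$ vertices and the other has $k/2$, so $P$ is not balanced.

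For odd $k$, I would apply Theorem~\ref{teo:teo1} directly: set $\eta' := \eta/2$, fix any $c \geq 1$, and note that $\Delta(P) = 2 \leq (\log n)^c$ for $n$ large, while $\delta^0(D) > (1+\eta)k/2 > (1+\eta')k/2$. The condition $k \geq \eta n \geq \eta' n$ is also satisfied, so Theorem~\ref{teo:teo1} yields the desired copy of $P$.

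For even $k$, I would extend $P$ to an antidirected path $P'$ on $k+1$ edges by appending a single edge at the endpoint whose direction of incidence allows the alternation to continue; choosing this endpoint to be of the majority type forces $P'$ to be balanced. Any copy of $P'$ in $D$ contains a copy of $P$, so it suffices to find $P'$. Setting again $\eta' := \eta/2$, the hypothesis $\delta^0(D) > (1+\eta)k/2$ gives $\delta^0(D) > (1+\eta')(k+1)/2$, because this reduces to $(\eta-\eta')k > 1 + \eta'$, which holds whenever $k \geq \eta n$ and $n$ is large enough; the requirement $k+1 \geq \eta' n$ is likewise satisfied. Theorem~\ref{teo:teo1} applied to $P'$ then embeds $P'$, hence $P$, into $D$.

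There is no substantive obstacle: the argument is a parity case split, a one-edge extension in the unbalanced case, and a small loss in the semidegree parameter to absorb the passage from $k$ to $k+1$. The only care needed is choosing $\eta'$ strictly less than $\eta$ (here $\eta/2$) and taking $n_0$ large enough so that the arithmetic inequality $(\eta - \eta')k > 1 + \eta'$ holds for all $k \geq \eta n$.
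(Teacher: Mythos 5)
Your proposal is correct and follows essentially the same approach as the paper, which handles the corollary via the footnote: apply Theorem~\ref{teo:teo1} directly when the antipath is balanced (odd $k$), and when it is not, extend by one edge and apply the theorem with a slightly smaller $\eta$. Your explicit parity analysis and the arithmetic verification that $(1+\eta)\frac k2 > (1+\eta')\frac{k+1}{2}$ for $\eta'=\eta/2$ and $k\ge\eta n$ with $n$ large just make the paper's one-line argument precise.
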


How about asking for oriented cycles instead of oriented paths or trees? 
Keevash, Kühn and Osthus \cite{keeko} showed  that every sufficiently large $n$-vertex oriented graph $G$ of minimum semidegree $\delta^0(G)\geq \frac{3n}8$ contains  a  directed Hamilton cycle, and  Kelly~\cite{kelly} showed that asymptotically, the same bound  guarantees any orientation of a Hamilton cycle.
Kelly, Kühn and Osthus \cite{kko1} extended this to a pancyclicity result, proving  that every  oriented graph~$D$ of minimum semidegree $\delta^0(D)\geq \frac{3n}8+o(n)$ contains every orientation of every cycle (of any length between $3$ and $n$). These authors also show that for cycles of constant length  the bounds can be improved, but the improvement depends on the {\it cycle type}: the number of forward edges minus the number of backwards edges of the cycle. For some cycle types and lengths, a semidegree exceeding~$\frac n3$ is necessary:  for instance, if $3$ does not divide $k$, a blowup of the directed triangle has minimum semidegree~$\frac n3$ but does not contain a directed $C_k$. For cycle type $0$ the bound on the minimum semidegree can be much lower: For each $k\ge 3$ and $\eta>0$ every large enough $n$-vertex oriented graph of minimum semidegree at least~$\eta n$ contains all oriented cycles of length at most $k$ and cycle type $0$~\cite{kko1}.

As antidirected cycles have cycle type $0$, the  discussion from the previous paragraph implies  that we can find short antidirected cycles with a minimum semidegree of $\eta n$, and antidirected cycles of any length with a minimum semidegree exceeding $\delta^0(D)\geq \frac{3n}8+o(n)$. 
In light of Theorem~\ref{teo:teo1} it seems natural to suspect that an intermediate bound on the minimum semidegree could suffice for antidirected cycles of medium length. This is indeed possible, as our next result shows.

\begin{theorem}
\label{teo:teo2}
For all $\eta\in(0,1)$ there is $n_0$ such that for all $n\geq n_0$ and $k\geq \eta n$, every  oriented graph $D$ on $n$ vertices with $\delta^0 (D)>(1 + \eta)\frac k2$ contains any  antidirected cycle of length at most $k$.
\end{theorem}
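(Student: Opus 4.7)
The plan is to reduce Theorem~\ref{teo:teo2} to Theorem~\ref{teo:teo1'} by converting the antidirected cycle into an antidirected path with prescribed endpoints. Note that any antidirected cycle has even length, say $\ell=2m$, with $m$ sources and $m$ sinks; deleting any edge thus leaves a balanced antidirected path of length $\ell-1$.

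I would split the argument by the length $\ell$ of the target cycle $C$. When $\ell=o(n)$, the hypothesis $\delta^0(D)>(1+\eta)k/2\ge(1+\eta)\eta n/2$ provides linear minimum semidegree, and the pancyclicity result of Kelly, Kühn and Osthus~\cite{kko1} (applied with a sufficiently small absolute constant) handles antidirected cycles of bounded length, since these have cycle type zero; a routine iterated or greedy extension then accommodates super-constant but sub-linear $\ell$.

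The substantive case is $\ell\ge\eta' n$ for some $\eta'>0$. Write $C=v_0 v_1\cdots v_{2m-1} v_0$ with $v_0$ a source having out-neighbours $v_1$ and $v_{2m-1}$ in $C$. Deleting the edge $v_0 v_1$ produces a balanced antidirected path $P$ of length $\ell-1$ with endpoints $v_0$ and $v_1$, so finding $C$ in $D$ amounts to embedding $P$ so that the image of $v_0$ has an out-neighbour equal to the image of $v_1$. I would first fix an edge $xy\in E(D)$ whose endpoints behave typically with respect to a regularity partition of $D$, and then apply Theorem~\ref{teo:teo1'} to $P$, rooted at $v_0$, with $\{x\}$ (or a small set containing $x$) as the prescribed target set for the root.

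The main obstacle is that Theorem~\ref{teo:teo1'} controls only the root, so nothing a priori forces the image of $v_1$ to coincide with $y$. To bridge this, I would reserve a small absorbing structure inside $N^-(y)$ before invoking Theorem~\ref{teo:teo1'}; apply that theorem to the initial segment of $P$, omitting a short tail near $v_1$; and then greedily attach the remaining short tail inside the reserved structure, forcing the last vertex to be $y$. The slack $\eta k/2$ in the minimum semidegree bound gives the room to set aside the absorbing structure without invalidating the hypothesis of Theorem~\ref{teo:teo1'} on the remaining digraph, and the edge $xy$ then completes the cycle.
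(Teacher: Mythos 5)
Your route --- delete an edge of the cycle, embed the resulting balanced antipath via Theorem~\ref{teo:teo1'}, and close it with a reserved absorber --- is genuinely different from the paper's. The paper deduces Theorem~\ref{teo:teo2} from Theorem~\ref{teo:teo3}: a long antidirected cycle is a long antisubdivision of $K_3$; a shorter cycle of length at least $6$ is completed to a long $k$-edge antisubdivision of $K_4$ (so the hypothesis still applies with the original $k$); and $C_4$ comes from~\cite{kko}. Closing the cycle never arises as a separate step there, because the proof of Theorem~\ref{teo:teo3} places all branch vertices and the endpoints of the deleted $3$-edge subpaths into the same two slices $C_{a_1}^1, C_{b_1}^1$, and then completes each deleted subpath by a two-vertex connection inside the regular pair $(C_{a_1}^1, C_{b_1}^1)$, with full control over both ends.

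Your version has a gap at exactly the point you flag, and it is not a small one. First, Theorem~\ref{teo:teo1'} only lets you steer the chosen vertex into a set $V^*$ of size at least $\eta n$ (see Definition~\ref{gamma-emb}); you cannot prescribe a single vertex $x$, so the edge $xy$ cannot be fixed in advance of the embedding. Second, and more seriously, the theorem gives no information at all about where any vertex other than the root lands, so you have no control over the image of the truncation point $z$. The reserved absorber in $N^-(y)$ would therefore have to be reachable from \emph{every} possible image of $z$, by an antiwalk of exactly the reserved length with the correct alternating orientations, and simultaneously $y$ must be an out-neighbour of whatever $\phi(v_0)\in V^*$ turned out to be. Making this precise amounts to a new connecting lemma (reserve a quasirandom slice of the regularity partition, expand forward from $\phi(z)$ and backward from the out-neighbourhood of $\phi(v_0)$, and reconcile orientations and parity), none of which is a routine corollary of the paper's stated results. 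A secondary soft spot is the intermediate regime: the result of Kelly, K\"uhn and Osthus you invoke covers cycles of cycle type $0$ and bounded length, and a ``routine iterated or greedy extension'' of an antidirected cycle by two is asserted but not provided; the paper handles every $\ell\ge 6$ below the linear threshold in one stroke by padding $C$ out to a $k$-edge long antisubdivision of $K_4$ and invoking Theorem~\ref{teo:teo3} with the original parameters.
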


We will in fact prove a more general result,  Theorem~\ref{teo:teo3} below.  Theorem~\ref{teo:teo3} focuses on embedding antidirected subdivisions of complete graphs. Before stating Theorem~\ref{teo:teo3}, let us review the history of (oriented) subdivisions in (oriented) graphs of high minimum (semi)degree. 

For undirected graphs, Mader~\cite{maderundir} proved that there is a function $g(h)$ such that every graph with  minimum degree at least $g(h)$ contains a subdivision of the complete graph $K_h$. Thomassen~\cite{tho} showed that a direct translation of this result to digraphs is not true: For every function $g(h)$ there is a digraph of minimum outdegree $g(h)$ that does not contain a subdivision of the complete digraph on $h\ge 3$ vertices. (A subdivision of a digraph $D$  substitutes each edge of $D$ with a {directed} path.) Mader~\cite{mader} modified Thomassen's construction  showing that the same is true if we replace the minimum outdegree with the   minimum semidegree. Crucially, the constructed digraphs have no even directed cycles (which appear in subdivisions of~$K_3$). Following these discoveries, Mader suggested to replace the subdivision of the complete digraph with 
the {\it transitive tournament}, i.e.~the tournament without directed cycles. He conjectured the following.

\begin{conjecture}[Mader \cite{mader}]\label{mader}
There is a function  $f(h)$ such that every digraph of minimum outdegree at least $f(h)$ contains a subdivision of the transitive tournament of order $h$.
\end{conjecture}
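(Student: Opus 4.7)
The plan is to proceed by induction on $h$, constructing the subdivision of the transitive tournament $T_h$ one branch vertex at a time. The base case $h \le 2$ is trivial, since any single edge of $D$ is a subdivision of $T_2$. For the inductive step I would let $f(h)$ grow substantially faster than $f(h-1)$, almost certainly super-polynomial, mirroring the behaviour of Mader's bound for undirected $K_h$-subdivisions.

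First, I would clean up the host digraph. A standard greedy deletion argument yields a subdigraph $D' \subseteq D$ in which every vertex retains out-degree at least $f(h)/2$. From $D'$ one can then locate, via DFS or by iteratively deleting low in-degree vertices, a strongly connected subdigraph whose minimum semidegree remains a substantial fraction of $f(h)$. Strong connectivity is essential, because the heart of the argument is a directed linkage step: finding many internally disjoint directed paths between specified pairs of vertices, which by a directed Menger-type theorem requires high directed connectivity.

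Second, I would choose a candidate sink $v_h$ with many in-neighbours, apply the inductive hypothesis to $D' - v_h$ to obtain a subdivision of $T_{h-1}$ with branch vertices $v_1,\dots,v_{h-1}$, and then extend by routing internally disjoint directed paths from each $v_i$ to $v_h$. When direct routing fails, one would augment the current subdivision by rerouting some of its paths through the surplus of out-neighbours supplied by the degree hypothesis, in the spirit of Mader's original undirected proof adapted to the directed setting.

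The principal obstacle, and the reason this conjecture has resisted attack for decades, is precisely this directed linkage step. Unlike in undirected graphs, one cannot freely reroute through any available path: every rerouting must respect orientations, and the counterexamples of Thomassen and Mader show that large minimum semidegree does not even force arbitrary directed cycles. Any successful proof must therefore exploit a feature specific to the \emph{transitive} tournament—its acyclicity—so that all required paths can be ordered consistently with a single topological ordering of the branch vertices, thereby sidestepping the very obstructions that rule out subdivisions of the complete digraph.
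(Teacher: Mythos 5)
This statement is an open conjecture; the paper does not prove it, and neither do you. You correctly identify where your plan breaks: the directed linkage step. But identifying an obstacle is not the same as overcoming it, and your sketch does not supply any argument that closes the gap. Concretely, after applying the inductive hypothesis to obtain a subdivision of $T_{h-1}$ inside $D' - v_h$, you have no mechanism for producing $h-1$ internally disjoint directed paths from the branch vertices $v_1,\dots,v_{h-1}$ to $v_h$ that also avoid the interiors of the already-placed subdivision paths. High minimum out-degree (even high minimum semidegree) does not imply the kind of directed linkedness this requires; Thomassen's and Mader's constructions, which you cite, are precisely demonstrations that such degree conditions fail to force even a $K_3$-subdivision in the directed sense. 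The remark that one should ``exploit acyclicity so all paths respect a single topological ordering'' is a heuristic, not a lemma: nothing in your argument shows that the surplus out-neighbours can be consistently ordered or that reroutings preserving orientation exist. Until that step is made precise and proved, the induction does not go through.

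For context, the paper lists this as Conjecture~\ref{mader} (due to Mader), notes it is open even for $h=5$, and proves instead a weaker result for oriented graphs and \emph{antidirected} subdivisions of $K_h$ (Theorem~\ref{teo:teo3}), where the antidirected structure sidesteps the directed linkage problem entirely via a regularity/connected-antimatching argument rather than an inductive linkage argument.
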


This conjecture is open even for $h=5$. Recently, there has been much activity on variants of  Conjecture~\ref{mader}, see for instance~\cite{achlmt, chln, gps, gss, gss2, jagger, kky, ste}.
Aboulker, Cohen, Havet, Lochet, Moura and Thomass\'e~\cite{achlmt} observed that Conjecture~\ref{mader} is equivalent to the following conjecture.

\begin{conjecture}$\!\!${\rm\bf\cite{achlmt}}\label{conj3}
There is a function  $f(h)$ such that every digraph of minimum semidegree at least $f(h)$ contains a subdivision of a transitive tournament of order~$h$.
\end{conjecture}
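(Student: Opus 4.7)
My plan is to try to establish Conjecture~\ref{conj3} by induction on $h$, combined with the extraction of a highly ``semi-linked'' sub\-digraph whose existence is forced by the semidegree hypothesis. The intuition mirrors the undirected case of Mader's theorem: high minimum degree should force high linkedness, and high linkedness should force subdivisions. For the base cases $h\le 2$ the statement is trivial, while for $h=3$ one needs a vertex triple $u_1,u_2,u_3$ together with internally disjoint directed paths $u_1\to u_2$, $u_2\to u_3$, $u_1\to u_3$; a greedy argument that alternates out-neighbourhood and in-neighbourhood exploration from a vertex of large semidegree should suffice.

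The main technical step I would pursue is a digraph analogue of Mader's undirected linkedness lemma: there exists a function $g(h,k)$ such that every digraph of minimum semidegree at least $g(h,k)$ contains a sub\-digraph $D'$ in which, for any $k$ ordered pairs $(s_1,t_1),\dots,(s_k,t_k)$ of distinct vertices of $D'$, there are pairwise internally disjoint directed paths from $s_i$ to $t_i$. I would try to prove this along the undirected blueprint: iteratively prune vertices of relatively low semidegree inside semi-BFS balls, then use an averaging or potential argument to expose a routing-friendly core.

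Granted this, I would set $f(h)=g(h,\binom{h}{2})$ and proceed inductively: extract a semi-linked core $D'$, apply the induction hypothesis to obtain a subdivision $S$ of the transitive tournament of order $h-1$ in $D'$ with roots $u_1,\dots,u_{h-1}$, then choose a fresh vertex $u_h$ of high in- and out-degree outside the interior of $S$, and finally use the linkedness of $D'$ to route the $h-1$ internally disjoint directed paths joining $u_h$ to the $u_i$ (with orientations dictated by the transitive order), all while avoiding the interior of $S$. A small technical point will be to reserve, before applying induction, a sufficient pool of unused vertices at each $u_i$ so that the new paths into and out of $u_h$ can be attached.

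The entire difficulty concentrates in the extraction of the semi-linked core. Thomassen's and Mader's constructions show that minimum outdegree alone cannot force even a subdivision of the transitive tournament of order $3$, so any successful argument must genuinely combine the in- and out-degree bounds; and there is no known mechanism to convert a minimum-semidegree bound into a linkedness property strong enough to route all $\binom{h}{2}$ paths of a transitive-tournament subdivision simultaneously. This is precisely why Conjecture~\ref{conj3} remains open even at $h=5$, and I expect any serious attack will require a genuinely new structural insight—perhaps by studying alternating in/out BFS layers and extracting a structural obstruction incompatible with large minimum semidegree—rather than a direct adaptation of the undirected template sketched above.
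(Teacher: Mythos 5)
This statement is a conjecture, not a theorem of the paper: the authors explicitly record that the equivalent Conjecture~\ref{mader} is open even for $h=5$, and they prove only a related result (Theorem~\ref{teo:teo3}) about antidirected subdivisions in \emph{oriented} graphs, obtained by completely different means (regularity, connected antimatchings, and an explicit embedding). So there is no proof in the paper to compare against, and your proposal does not supply one either. The gap is exactly where you locate it, but it is worth being precise about why the proposed route is not merely unfinished but structurally blocked. Your plan rests on a ``digraph analogue of Mader's linkedness lemma'': a function $g(h,k)$ such that minimum semidegree at least $g(h,k)$ forces a subdigraph in which any $k$ prescribed ordered pairs can be joined by internally disjoint directed paths. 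Such a core with $k=\binom{h}{2}$ would immediately yield the subdivision in one step (no induction needed), so the lemma is at least as strong as the conjecture itself; asserting it as the ``main technical step'' is circular. Worse, the known constructions of Thomassen and Mader cited in the paper show that large minimum out-degree, and even large minimum semidegree, can coexist with the absence of any even directed cycle, hence with the absence of two internally disjoint directed paths between some pairs in both directions; and Thomassen separately showed that no amount of strong connectivity forces $2$-linkedness in digraphs. Any ``prune low-semidegree vertices inside BFS balls'' argument transplanted from the undirected setting will therefore fail: the obstruction is not local degree deficiency but a global parity/orientation phenomenon, which is precisely why Mader reformulated the problem in terms of transitive tournaments.

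Your inductive frame also has a secondary flaw even granting the core lemma: after embedding a subdivision $S$ of the transitive tournament of order $h-1$, the $h-1$ new paths from (and to) $u_h$ must avoid the \emph{interior} of $S$, but the linkedness property you posit only guarantees disjointness among the newly routed paths, not avoidance of a prescribed vertex set of unbounded size; one would need a ``linked modulo deletion'' statement, which again is unavailable. In short: the statement remains open, and the proposal identifies the difficulty honestly but does not overcome it.
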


We will show a result  along the lines of Conjectures~\ref{mader} and~\ref{conj3} for oriented graphs. Namely, we will prove that any oriented graph of large minimum semidegree contains an antidirected orientation of a subdivision of a complete graph~$K_h$, where~$h$ cannot be too large. We can even choose the lengths of the antidirected paths in the subdivision, as long as the very short paths do not span a cycle. To make this more precise, we need some notation.

For $h,k\in \mathbb N$, consider a subdivision of $K_h$ where each edge $e\in E(K_h)$ is substituted by a path of  length  $g(e)$,  with $\sum_{e\in E(K_h)}g(e)=k$. Call any antidirected orientation of this graph a {\it $k$-edge antisubdivision of $K_h$}. 
If furthermore the edges of $K_h$ with $g(e)< 3$ induce a forest in $K_h$, we say the $k$-edge antisubdivision is {\it long}. Note that in particular, if all edges of $K_h$ are substituted with paths of length at least 3, the resulting antisubdivision is long.

\begin{theorem}
\label{teo:teo3}
For all $\eta\in(0,1)$ there are $n_0\in\mathbb N$ and $\gamma>0$ such that for each 
$n\ge n_0$, each $h\le \gamma \sqrt n$ and  each $k\ge \eta n$  the following holds.
Every  oriented graph $D$ on $n$ vertices with $\delta^0 (D)>(1 + \eta)\frac k2$   contains each long $k$-edge antisubdivision of~$K_h$.
\end{theorem}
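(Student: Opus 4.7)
The plan is to split the antisubdivision $S$ of $K_h$ into a \emph{short part}, namely the antidirected paths of $S$ corresponding to the edges $e\in E(K_h)$ with $g(e)\le 2$, and a \emph{long part}, namely the remaining paths, which all have $g(e)\ge 3$. By hypothesis, the short edges span a forest $T_0\subseteq K_h$, and so the associated antidirected sub-structure $F_0\subseteq S$ is a tree on at most $2(h-1)=O(\sqrt n)$ vertices. I would embed $F_0$ first and then route the long paths one by one.

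For the preparatory stage I would apply a directed version of Szemerédi's regularity lemma to $D$, pass to the reduced digraph $R$ whose minimum semidegree is roughly $(1+\eta/2)\tfrac{k}{2}\cdot\tfrac{|V(R)|}{n}$, and set aside a small random reservoir $U\subseteq V(D)$ for later corrections. Since $|V(F_0)|=O(\sqrt n)$ while $\delta^0(D)=\Theta(n)$, a vertex-by-vertex greedy procedure embeds $F_0$ into $D$: each branch vertex of $F_0$ is mapped to a host of the correct source/sink type having a huge untouched in/out-neighbourhood, and each length-$2$ connector to a common in- or out-neighbour of the appropriate two branch-hosts. The remaining branch vertices of $K_h$ (those not touched by $T_0$) are then placed into clusters of $R$ with abundant neighbourhoods of the required orientation pattern, fixing the images of all $h$ branch vertices.

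For each remaining edge $e\in E(K_h)\setminus E(T_0)$ I would route an antidirected path of length exactly $g(e)\ge 3$ between the hosts of its endpoints, disjoint from previously embedded material. The routing uses a connection lemma for antidirected paths in $R$: trace an antidirected walk of length $g(e)$ through $R$ starting and ending at the clusters of the two endpoint-hosts with the prescribed endpoint orientations, then realise the walk in $D$ via regularity of the crossed cluster pairs. The slack $\eta k/2$ in the semidegree, together with the reservoir $U$, is used to balance vertex consumption so that the total usage across the $\binom h 2-|E(T_0)|$ long paths sums to exactly $k$ edges. Alternatively, one can apply Theorem~\ref{teo:teo1'} to embed in one go the antidirected tree obtained from $S$ by cutting each chord path at a suitable internal vertex, and then glue the cuts back together inside $U$; the fact that Theorem~\ref{teo:teo1'} allows one to prescribe where the root goes is precisely what makes this reduction possible.

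The hardest step is this simultaneous routing of the long antidirected paths under three constraints: prescribed exact lengths $g(e)$, prescribed endpoint source/sink orientations, and vertex-disjointness across up to $O(n)$ paths of total length $k$. The rigid source/sink parity of antidirected paths means the endpoint orientations dictate the parity of each $g(e)$, so ``off-by-one'' length corrections are impossible once the walk in $R$ has been fixed; this forces one to pre-compute, for each $e$, the pattern of cluster visits and to reserve enough capacity in each cluster in advance. The length-$\ge 3$ hypothesis on non-forest edges of $K_h$ is essential here because a length-$\le 2$ antidirected path has essentially no internal flexibility, whereas length $\ge 3$ leaves at least two internal vertices to route freely through $R$, allowing the balancing and absorption steps to succeed.
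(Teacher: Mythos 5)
Your high-level plan --- cut the antisubdivision down to a tree, embed it via regularity, then glue the cuts back inside a reserved set --- matches the paper's strategy, but there are two concrete problems. First, your fallback suggestion to invoke Theorem~\ref{teo:teo1'} on the cut-down tree cannot work as stated: the branch vertices of the antisubdivision have degree roughly $h-1$, and since $h$ may be as large as $\gamma\sqrt n$, the maximum degree of that tree far exceeds the $(\log n)^c$ bound that Theorem~\ref{teo:teo1'} requires. (There is also no guarantee the cut tree is balanced.) This is precisely why the paper does \emph{not} reduce to Theorem~\ref{teo:teo1'} but instead re-does the embedding ``by hand'' in Section~\ref{sec:subdi}, keeping the branch vertices and all short connectors in a small set $W$ of size $O(M_0 h^2)$ that fits comfortably into two thin slices $C^1_{a_1}\cup C^1_{b_1}$.

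Second, and more centrally, your primary approach never identifies the structure in the reduced oriented graph that makes the routing and capacity bookkeeping go through: the \emph{connected antimatching} $M=\{a_ib_i\}$ of Lemma~\ref{l5}. You correctly flag that the simultaneous routing of the long paths, with prescribed lengths, endpoint orientations and vertex-disjointness, is the hard step, but you leave it to an unspecified ``connection lemma.'' The paper resolves exactly this difficulty by routing the bulk of each long path into the $C^2$-slices of the matching pairs $C_{a_j},C_{b_j}$ (where out-vertices always go to $C_{a_j}$ and in-vertices to $C_{b_j}$, so parity is automatic), and spending only $O(M_0)$ vertices per path in the $C^1$-slices to walk between consecutive matching edges. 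Without something playing the role of $M$, there is no mechanism to guarantee that a walk of the right parity and length exists in $R$ from one endpoint cluster to another while the cumulative cluster usage stays below the cluster sizes; this is not a routine detail, and the proposal does not close it. The two cut vertices per chord path are then re-inserted at the end using a single edge between unused typical neighbourhoods in $C^1_{a_1}$ and $C^1_{b_1}$, which is much lighter than a general ``reservoir'' argument.
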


As indicated above, Theorem~\ref{teo:teo3} quickly implies Theorem~\ref{teo:teo2}.\footnote{Indeed, it suffices to note that any antidirected cycle with more than $4$ edges can be interpreted as a long antisubdivision of $K_3$, and so,  Theorem~\ref{teo:teo2} follows directly from Theorem~\ref{teo:teo3} if the antidirected cycle we are looking for is larger than the minimum semidegree of $D$. If we are looking for a shorter antidirected cycle $C$ of length at least $6$, we can complete $C$ to a long $k$-edge antisubdivision of $K_4$, which we can find with the help of Theorem~\ref{teo:teo3}. Finally, an antidirected $C_4$  exists by the results of~\cite{kko}.}

Let us now turn back to oriented trees. Looking for parameters related to the appearance of certain oriented trees in a digraph, a natural alternative to  the mini\-mum semidegree is the edge density.
 In 1970, Graham~\cite{graham} confirmed a conjecture he attributes to Erd\H os: for every antidirected tree $T$ there is a constant $c_T$ such that every sufficiently large directed graph $D$ on $n$ vertices and with at least $c_Tn$ edges contains~$T$. A similar statement does not hold for other oriented trees: 
A bipartite graph on sets $A$ and $B$, with every edge oriented from $A$ to $B$, has $|A||B|$ edges, and only has antidirected subgraphs~\cite{ahlrt,burr}. 

In 1982, Burr~\cite{burr} gave an improvement of Graham's result: Every $n$-vertex digraph~$D$ with more than $4kn$ edges contains each antidirected tree $T$ on $k$ edges. He obtains this result by greedily embedding~$T$ into a suitable bipartite subgraph of~$D$. Burr states that the bound $4kn$ on the number of edges can `almost certainly be made rather smaller', and provides an example where $(k-1)n$ edges are not sufficient: The $k$-edge star with all edges directed outwards, and the complete bipartite graph $K_{k-2,k-2}$ with half of the edges  oriented in either direction in an appropriate way (one can also take the $(k-1)$-blow-up of the directed triangle).
In 2013, 
Addario-Berry, Havet, Linhares Sales, Reed and Thomassé~\cite{ahlrt} formulated a conjecture which states that $(k-1)n$ is the correct bound.

\begin{conjecture}$\!\!${\rm\bf\cite{ahlrt}}
\label{ESantitrees} Every $n$-vertex digraph $D$ with more than $(k-1)n$ edges contains each antidirected tree on $k$ edges.
\end{conjecture}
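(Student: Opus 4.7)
The approach I would take is to pass to a bipartite auxiliary graph and then attempt to sharpen Burr's greedy embedding~\cite{burr} from its $4kn$ bound toward the tight $(k-1)n$ threshold.

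The first step is a \emph{bipartite reduction}. Every antidirected tree $T$ admits a canonical bipartition $(S, T\setminus S)$ into sources (only out-edges) and sinks (only in-edges), with every edge of $T$ going from $S$ to $T\setminus S$. Given a digraph $D$ on $n$ vertices with $|E(D)|>(k-1)n$, I would study the bipartite graph $H$ on vertex set $V_0 \sqcup V_1$, where $V_0,V_1$ are disjoint copies of $V(D)$, with edge $\{u,v\}$, $u\in V_0$, $v\in V_1$, whenever $(u,v)\in E(D)$. Then $H$ has $2n$ vertices and $|E(H)|=|E(D)|>(k-1)n$. A bipartite embedding of $T$ into $H$ sending $S$ to $V_0$ and $T\setminus S$ to $V_1$ projects to an antidirected copy of $T$ in $D$, \emph{provided} no two vertices of $T$ land on the two copies of the same vertex of $D$; this extra injectivity constraint will have to be enforced by local swaps in $H$, or by producing many embeddings and applying pigeonhole to find one that respects it.

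The second step is to \emph{extract a dense subgraph} from $H$. Since the average degree of $H$ exceeds $k-1$, an Erdős–Gallai-type pruning (iteratively deleting vertices of $V_0$ of degree at most $\alpha(k-1)$ and of $V_1$ of degree at most $(1-\alpha)(k-1)$, for a parameter $\alpha \in (0,1)$ calibrated to the ratio $|S|/|T\setminus S|$) yields a non-empty $H'\subseteq H$ with the corresponding one-sided minimum-degree bounds. The third step is a \emph{greedy embedding} in $H'$: root $T$ at a vertex of the larger colour class, process vertices in BFS order, and map each new vertex to an unused neighbour of its already-embedded parent, exploiting the side-specific degree bounds to guarantee enough free choices. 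Tuning $\alpha$ to the degree sequence of $T$ should let the process succeed whenever the two side degrees of $H'$ comfortably exceed the local branching of $T$.

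The principal obstacle is the familiar factor-two gap: an average-degree hypothesis of roughly $k-1$ generically yields a subgraph of minimum degree only about $(k-1)/2$, whereas naive greedy embedding requires $\geq d_T(v)+|V(T)|$ free neighbours at some image vertices. Bridging this gap is exactly why Conjecture~\ref{ESantitrees} has remained open. A successful proof will likely require either a \emph{stability analysis} showing that any near-extremal $D$ must be close to a blow-up of the oriented triangle (or of a one-way oriented complete bipartite digraph), thereby letting the strict inequality $|E(D)|>(k-1)n$ be leveraged to a contradiction in that regime; or a \emph{structural decomposition} of $T$ into a long antidirected backbone—embedded via Jackson's theorem inside a suitable oriented subgraph of $D$—together with bounded-size branches attached through a local-swap or absorption step. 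Natural partial milestones are the cases where $T$ is an antidirected path, a caterpillar, or a balanced tree of bounded maximum degree (for which the asymptotic Theorems~\ref{teo:teo1} and~\ref{teo:teo2} already provide insight). Simultaneously closing the factor-two degree loss and managing the injectivity defect of the bipartite reduction is, in my view, the hardest single point.
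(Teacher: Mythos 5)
The statement you are addressing is Conjecture~\ref{ESantitrees}, which is an \emph{open conjecture} of Addario-Berry, Havet, Linhares Sales, Reed and Thomass\'e; the paper does not prove it, and neither does your proposal. What the paper actually establishes is the much weaker Theorem~\ref{ES}: an asymptotic version ($(1+\eta)(k-1)n$ edges, $k\ge\eta n$) restricted to \emph{oriented} graphs and to \emph{balanced} antidirected trees of polylogarithmic maximum degree. Your text is a research plan rather than a proof: the two obstacles you correctly identify --- the factor-two loss between average degree and minimum degree, and the injectivity defect of the bipartite double cover (a source landing on $v_{\mathrm{out}}$ and a sink on $v_{\mathrm{in}}$ for the same $v$) --- are named but not overcome. ``Local swaps,'' ``pigeonhole over many embeddings,'' ``absorption'' and ``stability analysis'' are invoked without any construction, and the greedy step in your third paragraph demonstrably fails at minimum degree $\approx k/2$ for general trees (e.g.\ stars), which is precisely why the conjecture is open. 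So there is a genuine gap; indeed the entire difficulty of the conjecture sits inside the steps you defer.

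It is worth contrasting your plan with how the paper obtains its partial result, since the two diverge at exactly the points you leave open. The paper's route is: (i) Lemma~\ref{pseu}, which is essentially your step two with $\alpha=\tfrac12$, extracting a subdigraph of minimum \emph{pseudo-semidegree} at least $\tfrac k2$ (each vertex has in- and out-degree either $0$ or at least $\tfrac k2$); (ii) Lemma~\ref{antilemma}, which glues four copies of this subdigraph (two with reversed edges) into a genuine oriented graph of minimum semidegree at least $\tfrac k2$ --- this sidesteps your injectivity problem entirely, because one never works in the bipartite double cover, and the rooted embedding $\subseteq_{1/8}$ together with weak connectedness of $T$ forces the whole copy back into a single original copy of $D$; and (iii) Theorem~\ref{teo:teo1'}, the semidegree theorem with a prescribed location for the root. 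The factor-two loss is not ``bridged'' but rendered harmless, because for \emph{balanced} antitrees the correct semidegree threshold is $\tfrac k2$ rather than $k$ --- and this is exactly why the paper's result cannot reach unbalanced trees (the antidirected star in the $(k-1)$-blow-up of the directed triangle) or exact constants. If your goal is the full conjecture, you still need a fundamentally new idea for both of your deferred steps.
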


The authors of \cite{ahlrt} prove this conjecture for antidirected trees of diameter at most~3, and point out that 
it implies the well-known Erd\H os--S\'os conjecture for graphs.
If the digraph $D$ from  Conjecture~\ref{ESantitrees} is an {oriented}  graph, it is known~\cite{antipaths} that 
 about $\frac 32 (k-1)n$ edges are sufficient to find all antidirected paths on $k$ edges. 
 
We show that Conjecture~\ref{ESantitrees} is approximately true in oriented graphs  for all balanced antidirected trees of bounded maximum degree. 

\begin{theorem}
\label{ES}
For all $\eta\in(0,1)$ and $c\in\mathbb N$, there is $n_0\in\mathbb N$  such that for every $n\ge n_0$ and every $k\ge \eta  n$, every $n$-vertex oriented graph $D$  with more than $(1 + \eta)(k-1)n$ edges contains each balanced antidirected tree $T$ with $k$ edges and $\Delta(T)\leq (\log(n))^c$.
\end{theorem}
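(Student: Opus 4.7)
My plan is to reduce Theorem~\ref{ES} to the strengthened minimum-semidegree result Theorem~\ref{teo:teo1'} by extracting, from the edge-dense $D$, a subgraph on linearly many vertices with sufficiently high minimum semidegree. Starting from $|E(D)|>(1+\eta)(k-1)n$, I first apply a degeneracy-type cleaning: iteratively delete any vertex $v$ with $d^+(v)+d^-(v)<(1+\eta/2)(k-1)$. Each deletion removes fewer than $(1+\eta/2)(k-1)$ edges and a single vertex, so an easy induction shows the ratio $|E|/|V|>(1+\eta)(k-1)$ is preserved, and the process terminates at a nonempty $D_0\subseteq D$ with $d^+(v)+d^-(v)\ge (1+\eta/2)(k-1)$ for every $v\in V(D_0)$, and $|V(D_0)|>2k$.

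I would next partition $V(D_0)=W\sqcup S\sqcup T_0$ according to which semidegrees are high: $W$ collects the ``balanced'' vertices with $\min(d^+(v),d^-(v))\ge (1+\eta/4)\frac{k}{2}$, while $S$ is the set of ``source-like'' vertices with small $d^-$ (hence $d^+\ge (1+3\eta/4)\frac{k}{2}$ by the sum bound), and $T_0$ is the dual ``sink-like'' set; for $k$ large enough, $S\cap T_0=\emptyset$. In the ``dense case,'' where $|W|\ge (1-\delta)|V(D_0)|$ for a suitably small $\delta=\delta(\eta)>0$, one further mild cleaning inside $D_0[W]$ yields $D^*\subseteq D_0[W]$ with $\delta^0(D^*)>(1+\eta')\frac{k}{2}$ for some $\eta'=\eta'(\eta)>0$ on $|V(D^*)|=\Omega(k)$ vertices, so Theorem~\ref{teo:teo1'} applies directly and embeds $T$ into $D^*$.

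In the complementary ``bipartite case,'' $|S|+|T_0|\ge \delta|V(D_0)|$, so $D_0$ is approximately bipartite with most edges running from source-like to sink-like vertices. Here Theorem~\ref{teo:teo1'} cannot be invoked on any large subgraph: for example, if $D$ is the oriented complete bipartite graph with all edges from $X$ to $Y$, then every subgraph of $D$ has minimum semidegree zero, yet balanced antidirected $T$ still fits inside. I would therefore embed $T$ directly, mapping the source-vertices of $T$ into $S\cup W$ and the sink-vertices of $T$ into $T_0\cup W$, and then greedily extending the embedding, exploiting that $\Delta(T)\le(\log n)^c$ is vastly smaller than the relevant out/in-degrees of order $\Omega(k)$ available at each vertex, together with the balancedness of $T$.

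The main obstacle is the bipartite case. A clean strategy is to first show, using the high sum-semidegree in $D_0$, that the oriented bipartite subgraph between $S\cup W$ and $T_0\cup W$ has large ``bipartite minimum semidegree'': every source-like vertex has many out-neighbors among sink-like ones, and dually. A greedy tree-embedding in the spirit of Burr~\cite{burr} would then embed $T$ edge by edge, with the balancedness of $T$ and the polylogarithmic max-degree assumption both essential to preserve slack throughout. An alternative I would explore is to leverage the root-flexibility of Theorem~\ref{teo:teo1'} by embedding a large subtree of $T$ entirely inside $W$, rooted at an appropriate vertex whose chosen neighborhood connects to both parts of the bipartition, while the remaining ``bipartite'' portion of $T$ is placed across $S\cup W$ and $T_0\cup W$.
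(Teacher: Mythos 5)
Your approach is genuinely different from the paper's, and it has a real gap in the ``bipartite case.''

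Your degeneracy cleaning and the partition $V(D_0)=W\sqcup S\sqcup T_0$ are sensible, and your dense case (where $|S\cup T_0|$ is tiny relative to $k$, which requires $\delta=O(\eta^2)$ since $|V(D_0)|$ can be as large as $n\le k/\eta$) does reduce to Theorem~\ref{teo:teo1'} modulo routine constant tracking. The problem is the complementary case. A greedy embedding ``in the spirit of Burr'' is precisely what gives the weaker bound $4kn$, not $(1+\eta)(k-1)n$: at that density a source-like vertex has only about $k/2$ free out-neighbours in the sink-like side, and as the tree grows the available sets shrink; Burr compensates by taking a factor-four slack that you cannot afford here. The complete-bipartite example works trivially, but a sparse nearly-bipartite host with every vertex having roughly $(1+\eta)\frac k2$ usable neighbours on the other side is exactly as hard as the original minimum-semidegree problem, and you would end up needing a regularity-based argument there too --- i.e., essentially re-proving Theorem~\ref{teo:teo1'} in a degenerate bipartite form. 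Your final alternative (embed a big piece in $W$ via the root-flexible Theorem~\ref{teo:teo1'}, then stitch in the bipartite remainder) is only a sketch of a direction and does not specify how the two pieces are joined consistently with the antidirected orientation and the balance constraint.

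The paper sidesteps the dichotomy entirely. Lemma~\ref{pseu} extracts a subgraph $D'$ in which every vertex has each semidegree either $0$ or at least $(1+\eta)\frac k2$ (the \emph{pseudo-semidegree} condition) --- crucially, sources and sinks are allowed to have degree exactly zero, not merely small, which is what makes the next step clean. Lemma~\ref{antilemma} then glues four copies of $D'$ (two with reversed edges) along the source/sink sets to manufacture an oriented graph with genuine minimum semidegree $\ge (1+\eta)\frac k2$; applying Theorem~\ref{teo:teo1'} with the root constrained to a slice $V^*$ of the first copy, the antidirected structure of $T$ forces the \emph{entire} image of $T$ to stay inside that one copy, so $T\subseteq D'\subseteq D$. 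This is where the $T\subseteq_\eta D$ root-flexibility you half-noticed at the end is actually indispensable. If you want to salvage your route, you would need a full regularity-based embedding lemma for the bipartite-like case; the paper's pseudo-semidegree plus four-copies trick is a much shorter way to the same end.
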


In particular, Conjecture~\ref{ESantitrees} holds asymptotically for long antidirected paths in oriented graphs. Moreover, Theorem~\ref{ES} implies the asymptotic bound $2(k-1)n$ for digraphs (as any digraph can be turned into an oriented graph by deleting at most half the edges).

The paper is organised as follows.  We start with a sketch of the proofs of Theorems \ref{teo:teo2}, \ref{teo:teo3} and \ref{ES} in Section~\ref{sketch}. We then go through some preliminaries in Section~\ref{cap2}. We give the proof of Theorem \ref{teo:teo2} in Section~\ref{proofteo}.
We prove Theorem \ref{teo:teo3} in Section~\ref{sec:subdi}, and Theorem~\ref{ES} in Section~\ref{edge-dens}.
In Section~\ref{conclusion}, we discuss some open questions.

\section{Proof sketches}
\label{sketch}
\paragraph{Connected Antimatchings.}
Fundamental to our proofs is
the concept of a \textit{connected antimatching}: this  is a set $M$ of disjoint edges in an oriented graph $D$ such that  every pair of edges in $M$  is connected by an antidirected walk in $D$ (see Definition \ref{def-antimatching}).    Section~\ref{sec:antiM} is dedicated to connected antimatchings. First, we will prove that any oriented graph of large minimum semidegree  contains a large connected antimatching $M$.  Second, we will show that we can choose such an  antimatching~$M$ in a way that the edges of $M$ lie at bounded distance $d$ from each other.

\paragraph{Sketch of the proof of Theorem \ref{teo:teo2}}
Given an oriented graph $D$ and an antidirected tree~$T$ fulfilling the conditions of the theorem, we apply the digraph regularity lemma  to $D$ to find a partition into a bounded number of clusters.  The reduced oriented graph~$R$ maintains the  condition on the minimum semidegree, and thus has a large connected antimatching whose edges lie at bounded distance from each other.  (In particular, the clusters covered by $M$ can accommodate all of $T$.)

\begin{figure}[H]
    \centering
    \includegraphics[scale=0.6]{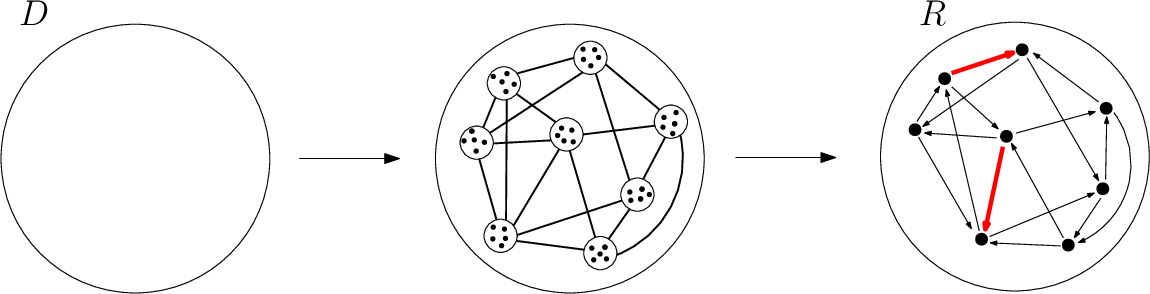}
    \caption{Regularise $D$ to obtain $R$, with a connected antimatching marked in bold.}
    \label{fig:reg}
\end{figure}

Now we turn to our antidirected tree $T$. We decompose $T$ into  a family $\mathcal{T}$ of small  subtrees (for details see Section~\ref{treedecompo}).
As we prove in Section \ref{pack}, it is possible to assign the trees in $\mathcal{T}$
 to edges of $M$ in a way that they will fit comfortably into the corresponding clusters.

\begin{figure}[H]
    \centering
    \includegraphics[scale=.72]{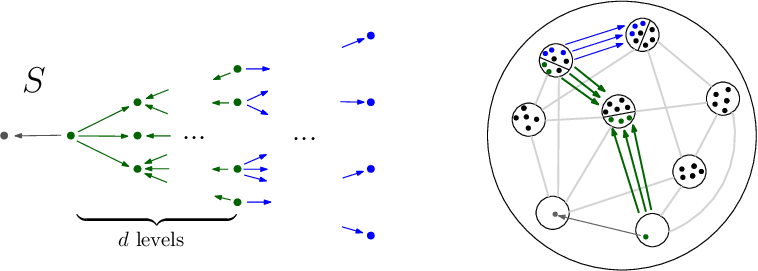}
    \caption{Embedding a small antidirected tree $S$ in $D$.}
    \label{fig:reg2}
\end{figure}
We now embed $T$ as follows. We go through the decomposition of $T$, embedding one small tree at a time, always keeping the embedded part connected in (the underlying tree of) $T$. We embed the first $d$ levels of each small tree $S$ into the clusters of an antiwalk in the reduced oriented graph that starts at the already embedded parent of the root of $S$, and leads to the edge $CD$ of $M$ that was assigned to $S$. All later levels of $S$ are embedded into clusters $C$ and $D$. 
Since $T$ has bounded maximum degree,  the union of  first $d$ levels
of  the trees in $\mathcal{T}$ is very small,  and therefore it is not a problem if  the first $d$ levels of $S$ are embedded  outside the edge $CD$. After going through all $S$, we have finished the embedding of $T$.

\paragraph{Sketch of the proof of Theorem \ref{teo:teo3}}
Let a long $k$-antisubdivision of $K_h$ be given, together with an oriented graph $D$ satisfying the conditions of the theorem. Since the antisubdivision is long, we can choose a set $\mathcal P$ of long antidirected paths of the antisubdivision such that deleting two inner vertices of each of the antidirected paths in $\mathcal P$ leaves us with an antidirected tree $T$. 

As in the proof of Theorem\ref{teo:teo1}, we find a connected antimatching $M$ in the reduced oriented graph of $D$ (see Section~\ref{sec:antiM} for details).
We embed $T$  with the help of $M$ as follows. Fixing one edge $e$ of~$M$, we embed the branch vertices of the antisubdivision into the  clusters corresponding  to $e$. We embed the long antipaths into clusters corresponding to $e$ and to other edges of $M$, with the first and last vertices on these antipaths going to clusters on  antidirected paths in the reduced oriented graph that connect the edges of $M$.

We finish the embedding by adding the two deleted vertices on the paths from~$\mathcal P$. Their neighbours were embedded into the clusters corresponding to the edge $e$, and regularity allows us to connect them by a $3$-edge path on unused vertices. This finishes the embedding of the antisubdivision.

\paragraph{Sketch of the proof of Theorem  \ref{ES}}
Given the antidirected tree $T$ and the oriented graph $D$ as in the theorem, we start by finding an oriented subgraph $D'$ of~$D$ where each vertex has either out-degree at least $\frac k2$ or out-degree $0$, and either in-degree at least $\frac k2$ or in-degree $0$. Moreover, $D'$ has at least one edge.

We then transform $D'$ into an oriented graph $D''$ which has minimum semidegree exceeding $\frac k2$. More precisely, $D''$ consists of four copies of $D'$, two of them with all edges reversed, glued together appropriately. 

We now need Theorem~\ref{teo:teo1'}, a variation of Theorem~\ref{teo:teo1} which allows us to embed~$T$ into $D''$,  guaranteeing that the root of $T$ is embedded in a small set of $V(D)$ which we can choose before the embedding. We choose to have the root  $v$ of $T$ embedded in a vertex $w$ of one of the original copies of $D$ (i.e.~a copy with edges in the original orientation). We can further ensure that one of the edges at $v$ is embedded into an edge having the original orientation of $D$. Then it is not hard to deduce that all  of~$T$  was embedded into an original copy of $D$, and therefore, $T$ is contained in $D$.

\section{Preliminaries}
\label{cap2}

\subsection{Basic digraph notation}\label{sec:basic}

A \textit{digraph} is a pair of sets $(V,E)$, where the elements of $V$ are called \textit{vertices} and $E$ is a set of ordered pairs of distinct vertices in $V$, called \textit{edges}. An \textit{oriented graph} is a digraph that allows at most one edge between a pair of vertices. 

The edges of a digraph $D$ are directed, and we will write $uv$ for an edge that is directed from $u$ to $v$. While a digraph may have both edges $uv,vu$ for any pair of vertices $u, v$, an {\it oriented graph} has at most one of these edges. The {\it out-degree} $\delta^+(v)$ of a vertex $v$ is the number of edges coming out of $v$, and the {\it in-degree} $\delta^-(v)$ is defined analogously. The \textit{minimum semidegree} $\delta^0(D)$ of a digraph is the minimum over the out-degrees and the in-degrees of all vertices in $D$.

We denote by $\vin(D)$ the set of all vertices of $D$ that have no outgoing edges, while $\vout(D)$ is the set of all vertices of $D$ that have no incoming edges. A digraph $D$ is {\it antidirected} if $V(D)=\vin(D)\cup \vout(D)$. If the underlying undirected graph of $D$ is a tree, we also call $D$ an {\it antitree}. {\it Antipaths} and {\it anticycles}  are defined analogously. Note that anticycles are even.   The {\it length} of an antipath or anticycle is the number of edges in it.

An {\it antiwalk} (or {\it antidirected walk}) is a sequence of edges that alternate directions, but we usually denote it by writing the sequence of the corresponding vertices. 
Note that an antiwalk may repeat vertices and edges, and   a repeated vertex $v$ may have both outgoing and incoming edges in the antiwalk. 
The {\it length} of an antiwalk is the length of the corresponding sequence of edges. For instance, the antiwalk $abab$ has length $3$, while having only one edge.

 Finally, an antitree $T$ is \textit{balanced} if $|V_\text{in}(T)|=|V_\text{out}(T)|$. Note that antipaths of odd length are balanced.

\subsection{Tree decompositions}\label{treedecompo}
In our proof of the main theorem, we will have to partition the given antitree into smaller trees. It will be enough to define this decomposition for  undirected trees, as we can apply it later to the underlying graph of our antitree. This type of decomposition appeared in the literature already in the 1990's, for instance in~\cite{tree}. Here we use a version from~\cite{bsp}:

\begin{definition}[$\beta$-decomposition]
Let $\beta \in (0,1)$, and let $T$ be a tree on $k+1$ vertices, rooted at $r$. If there are a set $W\subseteq V(T)$ and a family $\mathcal{T}$ of disjoint rooted trees such that
\begin{enumerate}[(i)]
    \item $r\in W$;
    \item $\mathcal{T}$ consists of the components of $T-W$, and each $S\in\mathcal{T}$ is rooted at the vertex closest to the root of $T$;
    \item $|S|\leq \beta k$ for each $S\in\mathcal{T}$; and
    \item $|W| \le \frac 1\beta + 2$,
\end{enumerate}
then the pair $(W,\mathcal{T})$ is called a \textit{$\beta$-decomposition} of $T$.
\end{definition}

\begin{lemma}$\!\!${\rm\bf\cite{bsp}}
\label{l8}
Let $\beta \in (0,1)$, and let $T$ be a tree on $k+1$ vertices, rooted at $r$. Then there is a {$\beta$-decomposition} $(W,\mathcal{T})$ of $T$.
\end{lemma}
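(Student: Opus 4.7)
The plan is to construct $W$ greedily, starting from $W_0 := \{r\}$ and at each step adding one vertex that breaks up a component of $T - W_i$ that is still too large. Precisely: if every component of $T - W_i$ has at most $\beta k$ vertices, stop and set $W := W_i$; otherwise pick a component $C$ of $T - W_i$ with $|C| > \beta k$, view $C$ as rooted at its vertex $u_C$ closest to $r$ in $T$, and descend from $u_C$ while the current vertex has some child in $C$ whose subtree in $C$ has more than $\beta k$ vertices, always stepping into such a child. Since $T$ is finite, this descent halts at a vertex $v \in C$ for which the subtree $S$ of $C$ rooted at $v$ satisfies $|S| > \beta k$ but every child of $v$ in $C$ roots a subtree of $C$ of size at most $\beta k$; set $W_{i+1} := W_i \cup \{v\}$ and iterate. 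When the process terminates, take $\mathcal{T}$ to be the components of $T - W$, each rooted at its vertex closest to $r$. Conditions (i)--(iii) of the definition are then immediate from the construction and the stopping rule.

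For the size bound (iv), I would track the potential
\begin{equation*}
\Phi(W_i) \;:=\; \sum_{\substack{C \text{ component of } T - W_i \\ |C| > \beta k}} |C|,
\end{equation*}
which starts at $\Phi(W_0) \leq k + 1$. When the algorithm adds $v \in C$ as above, the component $C$ is replaced by the ``upper piece'' $C_{\mathrm{up}} := C \setminus S$, which is connected (or empty) and satisfies $|C_{\mathrm{up}}| = |C| - |S| < |C| - \beta k$, together with the subtrees of $C$ rooted at the children of $v$ in $C$, each of size at most $\beta k$ and hence contributing $0$ to $\Phi$. Thus $\Phi$ drops by strictly more than $\beta k$ at every step, so the procedure halts after fewer than $(k+1)/(\beta k) = 1/\beta + 1/(\beta k)$ iterations. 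In the main regime $\beta k \geq 1$ this forces $|W| = 1 + (\text{\# iterations}) \leq 1/\beta + 2$; the degenerate regime $\beta k < 1$ is trivial since then $k < 1/\beta$, so even $|W| \leq |V(T)| = k + 1 < 1/\beta + 2$.

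The only delicate point is the potential-function bookkeeping: the defining property of $v$ is exactly what is needed to ensure that slicing off the subtree $S$ either eliminates the oversize component entirely or leaves behind a strictly smaller (but possibly still oversize) piece $C_{\mathrm{up}}$ to be treated later. Everything else is standard tree-decomposition manipulation, and no additional tools beyond counting vertices are needed.
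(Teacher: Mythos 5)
The paper does not prove Lemma~\ref{l8}; it cites~\cite{bsp} for it, so there is no in-paper argument to compare against. Your greedy construction is correct: you descend within an oversize component to a vertex $v$ whose subtree still has more than $\beta k$ vertices but all of whose child-subtrees have size at most $\beta k$, add $v$ to $W$, and iterate; conditions (i)--(iii) then follow directly from the stopping rule, and the potential function (total mass of oversize components) drops by strictly more than $\beta k$ at each step, giving $|W|\le 1/\beta + 2$ after the small case-split on whether $\beta k\ge 1$. (One can shave the constant slightly, since $\Phi(W_0)\le k$ rather than $k+1$, but the stated bound already follows.) This is the standard technique for such tree-decomposition lemmas, going back at least to~\cite{tree}, and is the same in spirit as the argument in the cited reference~\cite{bsp}.
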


Let us see what happens to a balanced tree $T$  if we remove some vertices from the upper levels of the trees $S\in\mathcal T$ of the tree-decomposition of $T$. We first define what exactly we wish to remove.
 As usual, the $j$th level of a tree $S$ contains every vertex in $S$ that has distance  $j$ to the root of $S$.

\begin{definition}[$Lev_m(S)$, $L_m(T)$]
Given $j\in\mathbb N$ and a $\beta$-decomposition  $(W, \mathcal{T})$ of a tree $T$,
define $Lev_j(S)$ as the union of the first $j$ levels of  $S\in\mathcal{T}$. Set  $L_j(T; (W, \mathcal{T}) )=\bigcup_{S\in\mathcal{T}}Lev_j(S)$, and write shorthand  $L_j(T)$ if $(W, \mathcal{T})$ is clear from the context.
\end{definition}
We show now that after removing $L_j(T)$ from $T$ for some bounded $j$, the remainder of $T$ is still relatively well balanced.

\begin{lemma}
\label{l9}
Let $k, j\in\mathbb{N}^+$, $\alpha, \beta\in (0, \frac 12)$. Let $T$ be a balanced rooted antitree with~$k$ edges such that $\Delta(T)\leq (\frac{\alpha\beta k}8)^{\frac 1{j+1}}$. Let $(W,\mathcal{T})$ be a  $\beta$-decomposition of $T$. Then, $(1-\alpha)p_T \leq q_T \leq  (1+ \alpha)p_T$, where
$$p_T:=|V_\text{in}(T) - L_{j}(T)|\text{ and }q_T:=|V_\text{out}(T) - L_{j}(T)|.$$
\end{lemma}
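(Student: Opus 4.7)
The plan is to observe that the hypothesis on $\Delta(T)$ forces $L_{j}(T)$ to be a small set, and that the balance of $T$ then prevents any small set from creating a large imbalance between $|V_\text{in}(T)-L_j(T)|$ and $|V_\text{out}(T)-L_j(T)|$. With these two observations the two desired inequalities collapse to a calculation.

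First I would bound $|L_j(T)|$. Every component $S \in \mathcal{T}$ has its root adjacent in $T$ to some vertex of $W$, so the number of components satisfies
$$|\mathcal{T}|\;\le\;|W|\cdot\Delta(T)\;\le\;\bigl(1/\beta+2\bigr)\Delta(T).$$
Inside each $S$, the $i$-th level contains at most $\Delta(T)^{i-1}$ vertices, so $|Lev_j(S)|\le\sum_{i=1}^{j}\Delta(T)^{i-1}\le\Delta(T)^j$. Combining these bounds with the hypothesis $\Delta(T)^{j+1}\le \alpha\beta k/8$ and with $\beta\le 1/2$, I obtain
$$|L_j(T)|\;\le\;\bigl(1/\beta+2\bigr)\Delta(T)^{j+1}\;\le\;(1+2\beta)\frac{\alpha k}{8}\;\le\;\frac{\alpha k}{4}.$$

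Since $T$ is a balanced antitree, $|V_\text{in}(T)|=|V_\text{out}(T)|=(k+1)/2$ and $V(T)$ is the disjoint union of these two sets. Hence
$$p_T,\,q_T\;\ge\;\frac{k+1}{2}-|L_j(T)|\;\ge\;\frac{k}{2}-\frac{\alpha k}{4}\;\ge\;\frac{3k}{8}$$
(the last inequality uses $\alpha<1/2$), while the trivial bound on the discrepancy yields
$$|p_T-q_T|\;=\;\bigl||V_\text{in}(T)\cap L_j(T)|-|V_\text{out}(T)\cap L_j(T)|\bigr|\;\le\;|L_j(T)|\;\le\;\frac{\alpha k}{4}\;\le\;\alpha\cdot\frac{3k}{8}\;\le\;\alpha p_T,$$
which is equivalent to $(1-\alpha)p_T\le q_T\le (1+\alpha)p_T$, as required.

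I do not foresee any genuine obstacle: the lemma is essentially a counting calculation. The only delicate point is tracking constants so that the $1/\beta$ coming from the bound on $|\mathcal{T}|$ is cancelled by the $\beta$ factor in the hypothesis $\Delta(T)^{j+1}\le\alpha\beta k/8$, leaving a bound on $|L_j(T)|$ small enough (a $1/4$-fraction of $\alpha k$) that it fits comfortably inside the target $\alpha p_T\ge 3\alpha k/8$.
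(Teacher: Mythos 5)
Your proof is correct and follows essentially the same route as the paper: both arguments bound $|L_j(T)|\le |W|\cdot\Delta(T)^{j+1}\le \tfrac{\alpha k}{4}$ (the paper via balls of radius $j$ around $W$, you via $|\mathcal T|\le |W|\Delta(T)$ times the level count, which is the same bound), and then invoke balancedness to control the discrepancy. You merely spell out the final "easily deduce" step, which the paper leaves implicit.
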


\begin{proof}
Note that $L_j(T)$ is contained in the 
union of the balls of radius $j$ centered at the  
vertices of $W$. Thus 
$$|L_j(T)|\le |W|\cdot \Delta^{j+1}\le \frac{2\Delta^{j+1}}\beta \le \frac{\alpha k}{4}.$$
As $T$ is balanced, we easily deduce the desired inequalities.
\end{proof}

\subsection{Packing trees into edges}\label{pack}

Once the given antitree is cut into small pieces $S$, and we shaved off their first levels~$L_j$, the details of which are described in  Section~\ref{treedecompo}, we will have to decide how to pack the remainder of the small pieces into the edges of the connected antimatching of the reduced oriented graph.
The following lemma shows how to allocate all of the $S\setminus L_j(T)$.

\begin{lemma}
\label{l10}
Let $m, t\in\mathbb{N}$, $\alpha>0$ and let $(p_i, q_i)_{i\in I}\subseteq \mathbb{N}^2$ be a family such that:
\begin{enumerate}[(a)]
    \item $(1-\alpha)\sum_{i\in I} p_i \leq \sum_{i\in I} q_i \leq (1+\alpha)\sum_{i\in I} p_i$,
    \item\label{balpha} $p_i + q_i \leq \alpha m$, for all $i\in I$, and
    \item \label{calpha}$\max\{\sum_{i\in I} p_i, \sum_{i\in I} q_i\} < (1-10\alpha)mt$.
\end{enumerate}
Then there is a partition $\mathcal{J}$ of $I$ of size $t$ such that for every $J\in\mathcal{J}$,
$$\sum_{j\in J} p_j \leq (1-7\alpha)m\text{ \ and \ } \sum_{j\in J} q_j \leq (1-7\alpha)m.$$
\end{lemma}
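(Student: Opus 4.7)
My plan is to construct $\mathcal{J}$ greedily: initialize $t$ empty groups, then process the items of $I$ one at a time in some chosen order (e.g.\ in decreasing order of $\max(p_i,q_i)$), at each step adding the current item $i$ to a group $J\in\mathcal{J}$ that can accommodate it, meaning both $\sum_{j\in J}p_j+p_i\le(1-7\alpha)m$ and $\sum_{j\in J}q_j+q_i\le(1-7\alpha)m$. If every item is placed this way, the resulting partition has the desired property, so the whole argument reduces to showing that at every step some group accommodates the current item.

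To see this, suppose for contradiction that no group accommodates item $i$. By (b) we have $p_i,q_i\le\alpha m$, so for each $r\in\{1,\dots,t\}$ either $\sum_{j\in J_r}p_j>(1-8\alpha)m$ (the group is ``$p$-full'') or $\sum_{j\in J_r}q_j>(1-8\alpha)m$ (the group is ``$q$-full''). Let $A,B$ denote these two sets of groups, so $A\cup B=\{1,\dots,t\}$. Condition (c) applied to the total $p$-mass gives $|A|\,(1-8\alpha)m<\sum_{i\in I}p_i<(1-10\alpha)mt$, hence $|A|<(1-10\alpha)t/(1-8\alpha)$, and analogously for $|B|$. For a group in $A\cap B$ both inequalities hold, so its combined $(p+q)$-load exceeds $2(1-7\alpha)m-(p_i+q_i)\ge(2-15\alpha)m$. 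Summing combined loads over all groups and using the bound $\sum p_i+\sum q_i\le 2(1-10\alpha)mt$ that follows from (a) and (c) yields an upper bound on $|A\cap B|$, which combined with the identity $|A|+|B|=t+|A\cap B|$ should contradict $A\cup B=\{1,\dots,t\}$.

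The main obstacle will be making the counting sharp enough. The estimates above are not, by themselves, tight enough to force a contradiction for every $\alpha$ in the valid range; a refinement of the greedy is likely needed. Reasonable avenues are: (i) choosing a specific item order (so that $p$-heavy and $q$-heavy items are interleaved, keeping each group's two loads growing in parallel); (ii) adding an exchange step in which items are swapped between $p$-full and $q$-full groups to clear room for a blocked item; or (iii) an LP-rounding argument starting from the fractional solution $x_{i,r}=1/t$, which has per-bin loads $P/t$ and $Q/t<(1-10\alpha)m$, and rounding to integers at the cost of at most $\alpha m$ per dimension per bin. In any such approach, condition (a) is what links $P+Q$ to $\max(P,Q)$ and so controls the doubly-full set $A\cap B$; condition (b) guarantees that a single item cannot overshoot a bin's capacity; and condition (c) supplies the $3\alpha m$ per-bin per-dimension slack between $(1-10\alpha)m$ and $(1-7\alpha)m$ that makes the insertion argument possible at all.
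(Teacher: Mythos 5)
Your proposal is, as you acknowledge, an incomplete sketch rather than a proof: the counting at the contradiction step does not close. Concretely, you obtain $|A|, |B| < \tfrac{1-10\alpha}{1-8\alpha}\,t$ and, after summing combined loads, $|A\cap B| < \tfrac{1-12\alpha}{1-7\alpha}\,t$. Combined with $|A|+|B| = t + |A\cap B|$ this yields at best $|A\cap B| < t$, which does not contradict $A\cup B = [t]$ for any $\alpha > 0$. The deeper reason is that nothing in your greedy ties the $p$-load and $q$-load of a single bin together: the counting cannot rule out a state in which many bins are $p$-full with negligible $q$-load while others are $q$-full with negligible $p$-load, and a pure-$p$ item has nowhere to go. Your avenue (i), interleaving $p$-heavy and $q$-heavy items so both loads grow in parallel, is the right intuition, but it is not developed; avenues (ii) and (iii) are also left unrealized.

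The paper's proof supplies exactly the control you are missing, via the quantity $\delta_i := p_i - q_i$. It chooses disjoint sets $A_1,\dots,A_t$ maximizing coverage subject to two per-bin invariants: $\sum_{i\in A_j} p_i \le (1-9\alpha)m$ \emph{and} $\delta_{A_j} = \sum_{i\in A_j}(p_i-q_i) \in [-\alpha m, \alpha m]$. The second invariant is what couples the two loads: once the $p$-load of a bin is controlled and its signed $\delta$ is small, the $q$-load is automatically controlled (this is exactly the chain of inequalities $\eqref{qq2}$ in the paper's proof). The maximality of the $A_j$'s then forces a sign coherence on the leftover set $R$: all $i\in R$ have $\delta_i$ of the same sign (claim~$\eqref{c8}$). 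This lets the proof drop the $\delta$-invariant in the second pass, greedily extend each $A_j$ by $p$-load (or symmetrically by $q$-load) alone, and conclude $R' = \emptyset$. If you want to repair your greedy along the lines of your avenue (i), you will essentially have to reinvent the $\delta$-invariant: without a per-bin constraint coupling $p$ and $q$, the packing can reach a blocked state even though the global totals obey (a)--(c).
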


\begin{proof}
For every $i\in I$, set $\delta_i:=p_i-q_i$ and for every  $S\subseteq I$,  define $\delta_S: = \sum_{i\in S} \delta_i$. 
Let $A_1, ..., A_t\subseteq I$ be disjoint sets such that for every $j\in[t]$,
\begin{equation}\label{queulat}
     \text{$\sum_{i\in A_j} p_i \leq (1-9\alpha)m$ and $\delta_{A_j} \in [- \alpha m, \alpha m]$,}
\end{equation}
and such that, under these conditions, $R:=I\setminus (A_1\cup ...\cup A_t)$ is minimised. Note that for each  $j\in R$ there is an index $k(j)\in[t]$ such that 
\begin{equation}
\label{c14}
\text{$ p_j+\sum_{i\in A_{k_p(j)}} p_i  \leq (1-9\alpha)m$,}
\end{equation}
since otherwise, using the fact that $p_j\le\alpha m$ by~\eqref{balpha},  we have 
$$\sum_{i\in I} p_i
\ge \sum_{k\in [t]}\sum_{i\in A_k} p_i 
>
t\cdot ((1-9\alpha)m-p_j)
 \ge t\cdot (1-10\alpha)m,$$ 
 a contradiction to \eqref{calpha}. 
Next, we claim that
\begin{equation}
\label{c8}
\delta_i\delta_j \geq 0 \text{ for every }i,j\in R.
\end{equation}

To see \eqref{c8}, suppose there are $a,b\in R$ with $\delta_a\delta_b < 0$. Say $p_a\geq p_b$. Then there is an index $k(a)$ such that \eqref{c14} holds with $j=a$, and since $p_a\ge p_b$, the inequality also holds when substituting $p_a$ with $p_b$. 
Moreover, since $a\in R$,  we know that neither  $a$ nor $b$ can be added to  $A_{k(a)}$ without violating \eqref{queulat}, and thus $$\delta_{A_{k(a)}} + \delta_a, \delta_{A_{k(a)}} + \delta_b \notin [-\alpha m, \alpha m].$$
However, $\delta_{A_{k(a)}}  \in [-\alpha m, \alpha m]$ by (ii). So, since we assumed that $\delta_a\delta_b<0$, it must be that $|\delta_a|+|\delta_b|>2\alpha m$. But this contradicts~\eqref{balpha}.
 We proved \eqref{c8}, which enables us to split the rest of the proof into two cases. 

{\bf Case 1: $\delta_i\ge 0$ for all $i\in R$. }
For $k\in [t]$, we let $A'_k\supseteq A_k$ be disjoint sets such that
$\sum_{i\in A'_k} p_i \leq (1-9\alpha)m$ 
 for every $k\in[t]$, and thus also, 
 \begin{equation}\label{qq2}
\sum_{i\in A'_k} q_j = \sum_{i\in A'_k} (p_j - \delta_{A'_k}) \le \sum_{i\in A'_k} (p_j - \delta_{A_k}) \leq  (1-9\alpha)m + \alpha m < (1-7\alpha)m.
\end{equation}
Similarly as for~\eqref{c14}, we can show that for each  $j\in R':=I\setminus(A'_1\cup ...\cup A'_t)$ there is an index $k'(j)\in[t]$ with 
$ p_j+\sum_{i\in A'_{k_p(j)}} p_i  \leq (1-9\alpha)m$, and thus, by~\eqref{qq2}, $q_j+\sum_{i\in A'_{k_p(j)}} q_i  \leq (1-9\alpha)m$. 
So $R'=\emptyset$, and we are done, taking $\mathcal{J}= \{A'_1, ..., A'_t\}$. 

{\bf Case 2: $\delta_i\le 0$ for all $i\in R$.} We proceed as in Case 1, but choose sets $A'_k\supseteq A_k$  such that
$\sum_{i\in A'_k} q_i \leq (1-9\alpha)m$ 
 for every $k\in[t]$. The rest of the  argument is similar.
\end{proof}

\subsection{Diregularity}
We will use the regularity method which goes back to Szemer\'edi's work from the 1970's~\cite{reg}. We will need regularity for oriented graphs, but start introducing the concept for undirected graphs.
Let $G$ be a graph, let $\varepsilon>0$ and let $A,B$ be two disjoint subsets of $V(G)$. The \textit{density} of the pair $(A,B)$ is $d(A,B)=\frac{|E(A,B)|}{|A|\cdot|B|}$, with $|E(A,B)|$ denoting the number of edges between $A$ and $B$. The pair $(A,B)$ is $\varepsilon$-\textit{regular} if $|d(X,Y)-d(A,B)|<\varepsilon$  for every $X\subseteq A$ and $Y\subseteq B$ satisfying $|X|>\varepsilon|A|$ and $|Y|>\varepsilon|B|$. 

Let $(A,B)$ be an $\varepsilon$-regular pair of density $d$ and let $Y\subseteq B$ be such that $|Y|>\varepsilon|B|$. A vertex $x\in A$ is called $\varepsilon$-\textit{typical} (or simply \textit{typical}) with respect to $Y$ if it has more than $(d-\varepsilon)|Y|$ neighbours in $Y$. It is well-known and easy to prove that $A$ has at most $\varepsilon|A|$ vertices that are not typical with respect to $Y$. 

A partition $\{V_0, ..., V_k\}$  of $V(G)$ is called an $\varepsilon$-\textit{regular} partition of $G$ if
\begin{enumerate}[(i)]
    \item $|V_0|\leq\varepsilon|V|$,
    \item $|V_1| = ... = |V_k|$,
    \item all but at most $\varepsilon k^2$ of the pairs $(V_i, V_j)$, with $1\leq i < j\leq k$, are $\varepsilon$-regular.
\end{enumerate}

Let $D$ be a digraph and let $A,B\subseteq V(D)$ disjoint. We denote by $(A,B)$ the oriented subgraph of $D$ with vertex set $A\cup B$ and every edge directed from $A$ to $B$ in $D$. In this case, we say the pair $(A,B)$ is $\varepsilon$-\textit{regular} if the underlying graph is $\varepsilon$-regular. With these notions and based on the Szemerédi's Regularity Lemma for graphs without orientation \cite{reg}, we show its version for digraphs as stated by Alon and Shapira in \cite{direg}:

\begin{lemma}[Degree form of the Diregularity Lemma \cite{direg}]
\label{direg}
For all $\varepsilon\in(0,1)$, $m_0\in\mathbb N$, there are $M_0, n_0\in \mathbb N$ such that for each $d\in[0,1]$ and for each digraph $D$ on $n\geq n_0$ vertices there are a partition of $V(D)$ into sets $V_0, V_1, ..., V_k$ and a spanning subdigraph $D'$ of $D$, called the  \textit{regularised digraph}, such that the following holds:
\begin{itemize}
    \item $m_0 \leq k \leq M_0$,
    \item $|V_0| \leq \varepsilon n$ and $|V_1| = \dotsc = |V_k| =: m$,
    \item $d_{D'}^+ (x) > d_D^+(x) - (d+\varepsilon)n$ for all vertices $x\in D$,
    \item $d_{D'}^- (x) > d_D^-(x) - (d+\varepsilon)n$ for all vertices $x\in V(D)$,
    \item for all $1\leq i<j\leq k$ and $i\neq j$, the bipartite graph $(V_i, V_j)_{D'}$ whose vertex classes are $V_i$ and $V_j$ and whose edge set consists of all the $V_i$-$V_j$ edges in $D'$ is $\varepsilon$-regular and has density either 0 or at least $d$,
    \item for all $1\leq i \leq k$ the digraph $D'[V_i]$ is empty.
\end{itemize}
\end{lemma}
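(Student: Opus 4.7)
The plan is to derive this degree form from the standard (non-degree) digraph regularity lemma via a cleaning-up procedure, in direct analogy with how the degree form of Szemer\'edi's Regularity Lemma is obtained from the original undirected version. The standard digraph regularity lemma, which is itself an easy consequence of the undirected form applied to the underlying graph $2$-coloured by the two possible orientations of each edge, gives, for any parameters $\varepsilon'>0$ and $m_0'$, a partition $V(D)=V_0\cup V_1\cup\cdots\cup V_k$ with $m_0'\le k\le M_0'$, $|V_0|\le \varepsilon' n$, $|V_1|=\cdots=|V_k|=m$, and all but at most $\varepsilon' k^2$ of the ordered pairs $(V_i,V_j)$ being $\varepsilon'$-regular.

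First I would apply this with $\varepsilon'$ much smaller than $\varepsilon$ (for instance $\varepsilon':=\varepsilon^2/10$) and with $m_0':=\max(m_0,\lceil 1/\varepsilon\rceil)$. Then I would form $D'$ from $D$ by deleting every edge that is either incident to $V_0$, internal to some cluster $V_i$, in an $\varepsilon'$-irregular ordered pair $(V_i,V_j)$, or in an ordered pair of density strictly less than $d$. By construction, $D'[V_i]$ is empty for each $i$, and each surviving ordered pair $(V_i,V_j)_{D'}$ is $\varepsilon$-regular (as $\varepsilon'<\varepsilon$) with density either $0$ or at least $d$, so the structural part of the conclusion is immediate.

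It then remains to verify the per-vertex degree bound, which I expect to be the main obstacle. Fix $x\in V_i$ and count the deleted out-edges: at most $|V_0|\le\varepsilon' n$ go to $V_0$; at most $m-1\le n/m_0'\le\varepsilon n$ stay inside $V_i$; out-edges in irregular pairs incident to $V_i$ contribute at most $m$ per pair; and out-edges in low-density pairs contribute at most $dm$ per pair on average over $V_i$. The delicate point is converting these on-average bounds into bounds valid for every single vertex. I would handle this by iterating the cleaning: first absorb into $V_0$ the at most $2\sqrt{\varepsilon'}\,k$ clusters that lie in more than $\sqrt{\varepsilon'}\,k$ irregular pairs, and then absorb those individual vertices whose own loss from low-density pairs exceeds the cluster-average by more than $\varepsilon n/2$, which by a Markov-type count against the total mass $\le dn^2/2$ of low-density edges are at most $\varepsilon n/2$ in number. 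After these absorptions $V_0$ still has size at most $\varepsilon n$, and every remaining vertex $x$ satisfies $d_{D'}^+(x)>d_D^+(x)-(d+\varepsilon)n$; the same argument applied with all orientations reversed yields the symmetric in-degree bound.
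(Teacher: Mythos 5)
The paper does not prove Lemma~\ref{direg}; it is quoted from Alon and Shapira~\cite{direg}, so there is no argument in the paper to compare yours against. Your overall plan — derive the degree form from the non-degree digraph regularity lemma by deleting edges incident to $V_0$, inside clusters, in irregular pairs and in low-density pairs, and then absorb offending clusters and vertices into $V_0$ — is indeed the standard route, and the first absorption step (throwing into $V_0$ the $\le 2\sqrt{\varepsilon'}k$ clusters lying in more than $\sqrt{\varepsilon'}k$ irregular pairs) is correct.

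The second absorption step has a real gap. You bound the number of vertices whose loss from low-density pairs exceeds the cluster average by more than $\varepsilon n/2$ via Markov against the total mass $\le dn^2/2$ of low-density edges. But that count gives at most $\frac{dn^2/2}{\varepsilon n/2}=\frac{dn}{\varepsilon}$ offending vertices, which is much larger than $\varepsilon n/2$ whenever $d$ is not tiny (and $d\in[0,1]$ is arbitrary in the statement, so $d$ can be, say, $1/2$). The argument must instead use the $\varepsilon'$-\emph{regularity} of each low-density pair: if $(V_i,V_j)$ is $\varepsilon'$-regular of density $d_{ij}<d$, at most $\varepsilon'|V_i|$ vertices of $V_i$ have more than $(d_{ij}+\varepsilon')|V_j|$ out-neighbours in $V_j$. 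Summing this $\varepsilon' m$ bound over the $\le k$ pairs at $V_i$ and applying Markov to this quantity — which, crucially, is independent of $d$ — shows that all but $\sqrt{\varepsilon'}m$ vertices of $V_i$ are atypical in at most $\sqrt{\varepsilon'}k$ of those pairs; the remaining low-density loss for such a vertex is then at most $(d+\varepsilon'+\sqrt{\varepsilon'})n$, which is what one wants. Finally, two routine points you omit but should mention: after removing the atypical vertices the clusters have unequal sizes and must be trimmed back to a common size (into $V_0$), and one must observe that removing an $O(\sqrt{\varepsilon'})$-fraction of each cluster preserves $\varepsilon$-regularity of the surviving pairs.
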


Given a regularised digraph $D'$ with clusters $V_1, ..., V_t$, the \textit{reduced digraph} $R$ is a digraph with vertices $V_1, ..., V_t$ such that the edge $V_iV_j$ exists only if $D'$ contains a $V_i$-$V_j$ edge. Observe that,  $R$ need not be an oriented graph, even if $D'$ is. However, it is possible~\cite{kko} to discard appropriate edges from the reduced digraph to find 
 an oriented spanning subgraph $R'$ of 
 $R$
 that preserves the minimum semidegree of the original oriented graph $D$ (proportionally to the order of the reduced digraph). The new oriented graph $R'$ is called the \textit{reduced oriented graph}. This result is formalised in the following lemma.

\begin{lemma}[see \cite{kko}]
\label{lreduc}
Let $\varepsilon, d\in[0,1]$, $m_0\in \mathbb N$, let $G$ be a large enough oriented graph and let $R'$ be the reduced digraph  obtained by applying the Lemma \ref{direg} to $G$ with parameters $\varepsilon, m_0$ and $d$. Then, $R'$ has a spanning oriented subgraph $R$ with
\begin{enumerate}[(i)]
    \item $\delta^+(R) \geq (\frac{\delta^+(G)}{|G|} - (3\varepsilon+d))|R|,$
    \item $\delta^-(R) \geq (\frac{\delta^-(G)}{|G|} - (3\varepsilon+d))|R|.$
\end{enumerate}
This oriented graph $R$ is called the $(\varepsilon, d)$-reduced oriented graph.
\end{lemma}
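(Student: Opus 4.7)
The plan is to prove the lemma in two stages: first, establish a lower bound on the semidegrees of $R'$ (as a digraph, not yet oriented) from the guarantees of Lemma~\ref{direg}; and second, extract an oriented spanning subgraph $R$ from $R'$ via a probabilistic orientation of the cluster pairs that carry edges in both directions in $R'$, tuned so that the semidegree loss is negligible. A concentration + union bound then delivers a deterministic $R$ with the desired semidegrees.

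For the first stage, fix a cluster $V_i$ with $|V_i|=m$, and let $e_{ij}$ be the number of edges in the regularised digraph $D'$ directed from $V_i$ to $V_j$. By Lemma~\ref{direg}, each $x\in V_i$ satisfies $d^+_{D'}(x)>\delta^+(G)-(d+\varepsilon)n$. Summing over $x\in V_i$ and using that $e_{ij}>0$ forces $V_iV_j\in R'$ and that $e_{ij}\le m^2$ for each such $j$,
\[
d^+_{R'}(V_i)\cdot m^2\ \ge\ \sum_{j:\,V_iV_j\in R'}e_{ij}\ >\ m\bigl(\delta^+(G)-(d+\varepsilon)n\bigr),
\]
and analogously for $d^-_{R'}(V_i)$.

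For the second stage, I would construct $R$ probabilistically. Call a pair $\{V_i,V_j\}$ \emph{double} if both $V_iV_j$ and $V_jV_i$ lie in $R'$. For each double pair, independently include $V_iV_j$ in $R$ with probability $p_{ij}:=e_{ij}/(e_{ij}+e_{ji})$ and otherwise include $V_jV_i$; all remaining one-way edges of $R'$ are kept in $R$. Since $G$ is oriented, $e_{ij}+e_{ji}\le m^2$, hence $p_{ij}\ge e_{ij}/m^2$. Writing $a_i$ for the number of $V_j$ with $V_iV_j\in R'$ but $V_jV_i\notin R'$, and combining the trivial $\sum_{j\text{ one-way}}e_{ij}\le a_im^2$ with the bound on $\sum_je_{ij}$ from the first stage, I obtain
\[
\mathbb{E}[d^+_R(V_i)]\ =\ a_i+\!\!\sum_{j\text{ double}}\!\!p_{ij}\ \ge\ a_i+\frac{1}{m^2}\!\!\sum_{j\text{ double}}\!\!e_{ij}\ \ge\ \frac{\delta^+(G)-(d+\varepsilon)n}{m},
\]
and likewise for $\mathbb{E}[d^-_R(V_i)]$. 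Since $|V_0|\le\varepsilon n$ yields $n/m\le(1+2\varepsilon)k$, a short case analysis on the sign of $\delta^+(G)/n-d-\varepsilon$ turns the right-hand side into $(\delta^+(G)/n-d-3\varepsilon)k$. Each semidegree of $R$ is a sum of at most $k$ independent $0/1$ indicators plus a constant, so Hoeffding's inequality together with a union bound over the $2k$ semidegrees of $R$ produces a deterministic $R$ whose semidegrees all lie within $o(k)$ of their expectations; this error is absorbed for $G$ large enough, establishing the claimed bounds.

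The main obstacle is keeping control of the degrees at clusters $V_i$ that have many double pairs in $R'$. A naive uniform orientation $p_{ij}=1/2$ would lose up to $(1/2-d)$ per double pair in the expected out-degree, which can be linear in $k$ and destroys the bound unless $d$ is close to $1/2$. The density-weighted choice $p_{ij}=e_{ij}/(e_{ij}+e_{ji})$ is exactly the one that converts the orientedness inequality $e_{ij}+e_{ji}\le m^2$ into the cancellation needed for $\mathbb{E}[d^+_R(V_i)]$ to be bounded below independently of the double-edge structure at $V_i$.
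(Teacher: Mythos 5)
The paper does not prove Lemma~\ref{lreduc}: it is cited as a known result from Kelly, K\"uhn and Osthus~\cite{kko}. So I am comparing your proposal with the argument from the source rather than with a proof in this paper.

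Your Stage~1 is correct, and the density-weighted orientation probability $p_{ij}=e_{ij}/(e_{ij}+e_{ji})$ is exactly the right choice for the expectation to come out cleanly; the expectation computation is sound. The problem is the concentration step. Writing $k=|R'|$ for the number of clusters, the slack available after converting $\frac{\delta^+(G)-(d+2\varepsilon)n}{m}$ into $\bigl(\frac{\delta^+(G)}{|G|}-(d+3\varepsilon)\bigr)k$ is only $\Theta(\varepsilon k)$, so Hoeffding gives a failure probability of $\exp(-c\varepsilon^2 k)$ per cluster and a union bound of roughly $2k\,e^{-c\varepsilon^2 k}$. For this to be less than $1$ you need $k$ of order at least $\varepsilon^{-2}\log\varepsilon^{-1}$. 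But $k$ does \emph{not} grow with $|G|$: the Diregularity Lemma only guarantees $m_0\le k\le M_0$, where $m_0$ is a free input and $M_0=M_0(\varepsilon,m_0)$ is a constant independent of $|G|$. So the sentence ``this error is absorbed for $G$ large enough'' is the gap: enlarging $G$ does not enlarge $k$. Indeed, with the paper's own choice $m_0=\lceil 1/\varepsilon\rceil$ in the proof of Theorem~\ref{teo:teo1'}, the number of clusters may be far below the $\varepsilon^{-2}$ threshold your union bound needs, so your argument would not cover exactly the situation in which the lemma is invoked here. To repair the proposal one either needs to build the requirement ``$m_0\ge c(\varepsilon)$'' into the lemma (which changes its statement), or replace the second-moment/Hoeffding step by a deterministic rounding of the fractional orientation you defined --- e.g.\ via the integrality of the associated bipartite flow/transportation polytope, which loses only $O(1)$ per cluster and is already absorbed by the $\varepsilon k$ slack once $k\ge C/\varepsilon$, a much milder condition that matches the $m_0=\lceil 1/\varepsilon\rceil$ regime used in the paper.
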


\subsection{Embedding small trees in antiwalks}

We  now show how to embed a small tree into an antiwalk of the reduced oriented graph. This will be useful for embedding the first levels of a piece $S$ of the decomposition of our antitree $T$ into an antiwalk leading to a suitable edge $a_ib_i$ of the connected antimatching. The edge $a_ib_i$ will accommodate the rest of $S$.

We start with a convenient definition,  expressing that the orientations of the small tree $S$ and the antiwalk coincide.
\begin{definition}[Consistent antiwalk and antitree]
Let $P$ be a non-trivial antiwalk and let $T$ be a rooted antitree. We say $P$ and $T$ are \textit{consistent} if  one of the following holds:
\begin{enumerate}[(i)]
    \item $r(T)\in V_{\text{out}}(T)$ and  $P$ starts with an out-edge, or
    \item $r(T)\in V_{\text{in}}(T)$ and  $P$ starts with an in-edge. 
\end{enumerate}
\end{definition}

Now we embed an antitree into a consistent antiwalk of a reduced digraph. A very similar result for undirected graphs was proved in~\cite{bsp}.

\begin{lemma}
\label{lembedding}
Let $\varepsilon\in(0, \frac{1}{4})$, $m,h\in\mathbb{N}$. Let  $P=Q_0 ... Q_{h-1}Q_{h}$ be an antiwalk in an $(\varepsilon, 2\sqrt{\varepsilon})$-reduced oriented graph $R$ of an oriented graph  $D$, where clusters have size~$m$.  For  $0\leq i\leq h$, let $Z_i\subseteq Q_i$ be such that $|Z_i|\geq 3\sqrt{\varepsilon} m $ if $i\neq 0$ and $|Z_0|\geq 3{\varepsilon} m $. For $i=h-1, h$, let $X_{i}\subseteq Q_{i}\setminus Z_{i}$  with $|X_i|>3\sqrt \varepsilon m$. Let $S$ be a rooted antitree with $|S|<\frac\varepsilon{10} m$ such that $P$ and $S$ are consistent.
Then there is an embedding $\varphi$ of $S$ into~$D$  such that
\vspace{-0.2cm}
\begin{enumerate}[(a)]
    \item for each $i\le h$, and for each $v$ from the $i$th level of $S$, $\varphi (v)\in Z_i$ and $\varphi(v)$ is typical with respect to $Z_{i+1}$ (or with respect to $Z_{i-1}$ if $i=h$), and
    \item for each $i> h$, and for each $v$ from the $i$th level of $S$, $\varphi (v)\in X_{h-1}$ is typical with respect to $X_{h}$ if $i-h$ is odd,  and $\varphi (v)\in X_{h}$ is typical with respect to $X_{h-1}$ if $i-h$ is even.
\end{enumerate}
\end{lemma}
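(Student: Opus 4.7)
The natural approach is a greedy, BFS-order embedding of $S$. Process the vertices of $S$ level by level: for each vertex $v$ at level $i$ with parent $u$ already placed, pick $\varphi(v)$ among the in-/out-neighbours of $\varphi(u)$ in the prescribed target cluster, chosen to be typical with respect to the cluster that will receive $v$'s own children. First, I would embed the root $r(S)$: by $\varepsilon$-regularity of the pair $(Q_0,Q_1)$ (density at least $2\sqrt\varepsilon$, since $R$ is $(\varepsilon,2\sqrt\varepsilon)$-reduced) together with $|Z_1|\ge 3\sqrt\varepsilon m>\varepsilon m$, at most $\varepsilon m$ vertices of $Q_0$ are non-typical with respect to $Z_1$; with $|Z_0|\ge 3\varepsilon m$, this leaves at least $2\varepsilon m$ typical candidates in $Z_0$, and I take any one as $\varphi(r(S))$.

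For the inductive step, let $v$ be a child at level $i+1$ of an already-embedded $u$ at level $i$. If $i+1\le h$, typicality of $\varphi(u)\in Z_i$ with respect to $Z_{i+1}$ yields more than $(2\sqrt\varepsilon-\varepsilon)|Z_{i+1}|\ge 6\varepsilon m-3\varepsilon^{3/2}m$ neighbours of $\varphi(u)$ in $Z_{i+1}$, in the direction prescribed by the $R$-edge $Q_iQ_{i+1}$. By the consistency hypothesis, together with the fact that both $S$ (antitree) and $P$ (antiwalk) alternate direction level by level, this direction matches that of the $S$-edge $uv$. Among these candidates I discard at most $\varepsilon m$ that are non-typical with respect to the relevant next target set (namely $Z_{i+2}$ if $i+1<h$; both $Z_{h-1}$ and $X_{h-1}$ if $i+1=h$, at the cost of doubling this loss) and fewer than $\varepsilon m/10$ already-used vertices; for $\varepsilon<1/4$ a positive surplus remains, so I pick any remaining vertex as $\varphi(v)$. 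For levels $i+1>h$ the argument is identical, using the $\varepsilon$-regular pair $(Q_{h-1},Q_h)$ and the sets $X_{h-1},X_h$ alternately as targets.

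The one real technical subtlety is orientation bookkeeping. Every level $\ge h$ of $S$ gets embedded into the two clusters linked by the single $R$-edge $Q_{h-1}Q_h$, which has a fixed direction; this forces each such level of $S$ to lie entirely in $\vin$ or entirely in $\vout$, with the type alternating from level to level in a manner compatible with the fixed $R$-edge direction. This compatibility is automatic: because $S$ is antidirected, its level-types alternate, and because $P$ is an antiwalk, its edge-directions alternate, and these two alternations stay synchronised from the root downward thanks to consistency. Once this is set up, the entire proof reduces to the inductive counting sketched above.
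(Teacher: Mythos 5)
Your proof is correct and takes essentially the same approach as the paper: a greedy, level-by-level embedding that places each vertex into the prescribed $Z_i$ or $X_{h-1}/X_h$ set, chosen typical with respect to the set its children will occupy, with the sizes $|Z_i|\ge 3\sqrt\varepsilon m$ (resp.\ $|Z_0|\ge 3\varepsilon m$), the atypicality bound $\varepsilon m$, and $|S|<\tfrac{\varepsilon}{10}m$ guaranteeing a valid choice at every step. Your explicit counting and orientation-synchronisation discussion spell out details the paper leaves implicit, but the underlying argument is the same.
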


\begin{proof}
Let $r(S)$ be the root of $S$ and denote the embedding by $\phi: V(S) \longrightarrow \cup_{i=0}^{h} Q_i$. Suppose that $Q_0Q_1\in E(R)$, and $r(S)\in V_\text{out}(S)$, the other case is  analogous.

We embed $r(S)$ in a typical vertex of $Z_0$ with respect to $Z_1$, and then embed successively, for $i< h$,  the vertices from the $i$th level into   vertices from $Z_i$ that are typical to $Z_{i+1}$.
(This is possible since we embed at most $|S|<\frac\varepsilon{10} m$ vertices in total, and since  $|Z_i|\geq 3\sqrt{\varepsilon}m$ for all $i$, while there are less than $\varepsilon m$ atypical vertices in each~$Z_i$.) For the vertices from the $h$th level we take care that their images are typical with respect to both $Z_{h-1}$ and  $X_{h-1}$. The vertices from later levels are embedded alternatingly into $X_{h-1}$ and $X_h$, and we take care that their images are typical with respect to both $Z_{h}$ and  $X_{h}$, or both $Z_{h-1}$ and  $X_{h-1}$, respectively.
\end{proof}


\section{Connected antimatchings}\label{sec:antiM}
In this section we introduce the concept of a connected antimatching, which is central to our proof. We also prove two lemmas (Lemma~\ref{l5} and Lemma~\ref{l6}) relating the minimum semidegree of an oriented graph $D$ to the existence and size of a connected antimatching in $D$, whose edges are not too distant from each other.

Before we can introduce the notion of a connected antimatching, we need some preliminary definitions.
\begin{definition}[in-vertex and out-vertex of an antiwalk]
Let $P$ be an antiwalk, and let $v\in V(P)$. 
We call $v$   an \emph{out-vertex} of $P$ if $P=v$ or $P$ has one or more edges of the form $vx$. We call  $v$  an \emph{in-vertex} of $P$ if $P=v$ or  $P$ has one or more edges of the form $xv$. 
\end{definition}
 Note that also in non-trivial walks a vertex  can be both an in- and an out-vertex.

\begin{definition}[out-walk; out-in-walk; out-out-walk]
Let $C$ be an oriented graph with $a,z\in V(C)$. A non-trivial antiwalk $P$ starting in $a$ and ending in $z$ whose first edge is directed away from $a$ is called an \emph{out-walk from $a$ to $z$}. 
Call $P$ an \emph{out-in-walk} if its last edge is directed towards $z$, and  an \emph{out-out-walk} otherwise. Also call a trivial walk an out-out-walk.
  \end{definition}

\begin{definition}[$In(C,a)$; $Out(C,a)$]
Let $C$ be an oriented graph with $a\in V(C)$. 
Then $\In(C,a)$ is the set of all  $z\in V(C)$ such that there is an out-in-walk from $a$ to~$z$, and  $\Out(C,a)$ is the set of all  $z\in V(C)$ such that there is an out-out-walk from $a$ to~$z$.  \end{definition}

Observe that $\In(C,a)$ and $\Out(C,a)$ are not necessarily disjoint.

\begin{definition}[anticonnected oriented graph]
Let $C$ be an oriented graph with $a\in V(C)$.  We say $(C,a)$ is \emph{anticonnected} if $V(C)=\In(C,a) \cup \Out(C,a)$.
\end{definition}

\begin{definition}[connected antimatching]\label{def-antimatching}
Call $M=\{a_ib_i\}_{1\leq i\leq m}$  a \emph{connected antimatching of size $m$}  in an oriented graph $D$ if $M$ is a matching in the underlying graph of $D$ and if 
$a_i\in \Out(D,a_1)$ for every $1\leq i\leq m$. We write $V(M)$ for the set of all vertices covered by $M$.
\end{definition}

Our first lemma of this section links the size of a connected antimatching in an oriented graph with its minimum semidegree. We can even choose a vertex to be included in the antimatching, as an outvertex of a matching edge (of course, by inverting all direction we could also choose it as an invertex).

\begin{lemma} 
\label{l5} Let $t\in\mathbb N^+$, let $D$ be an oriented graph with $\delta^0(D) \geq t$, and let $w\in V(D)$. Then $D$ has a connected antimatching $M=\{a_ib_i\}_{1\leq i\leq t}$ of size $t$, 
with $w=a_1$.
\end{lemma}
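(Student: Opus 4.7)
The plan is to set $a_1 := w$ and let $b_1$ be any out-neighbour of $w$, which exists since $\delta^+(w) \geq t$. The key observation is that every in-neighbour $a \neq w$ of $b_1$ automatically lies in $\Out(D, w)$: the walk $w \to b_1 \leftarrow a$ is a valid out-out antiwalk, since both edges point into the intermediate vertex $b_1$, the first edge leaves $w$, and the last edge leaves $a$. As $w \in N^-(b_1)$ and $\delta^-(b_1) \geq t$, the set $A := N^-(b_1) \setminus \{w\}$ satisfies $|A| \geq t - 1$ and supplies a reservoir of ready-made candidates for $a_2, \ldots, a_t$.

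It remains to produce $t - 1$ vertex-disjoint edges of the underlying graph of $D$, each with a distinct endpoint in $A$ and avoiding $\{w, b_1\}$. I would fix any $A_0 \subseteq A$ with $|A_0| = t - 1$, put $V_0 := V(D) \setminus (\{w, b_1\} \cup A_0)$, and apply Hall's theorem to the bipartite graph $H$ with parts $A_0$ and $V_0$ whose edges are inherited from the underlying graph of $D$. The relevant degree bound is that the underlying graph of $D$ has minimum degree at least $2t$, since in an oriented graph $N^+(v)$ and $N^-(v)$ are disjoint and each has size at least $t$. Hence each $a \in A_0$ loses at most $2$ neighbours to $\{w, b_1\}$ and at most $|A_0| - 1 = t - 2$ to $A_0 \setminus \{a\}$, so $\deg_H(a) \geq 2t - 2 - (t - 2) = t$. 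Every non-empty $S \subseteq A_0$ then satisfies $|N_H(S)| \geq t \geq |S|$, Hall's condition holds, and a matching in $H$ saturating $A_0$ provides $\{a_ib_i\}_{i=2}^t$ with $a_i \in A_0 \subseteq \Out(D, w)$ and $b_i \in V_0$. Together with the edge $a_1b_1$ this yields the required connected antimatching $M = \{a_ib_i\}_{1 \leq i \leq t}$ of size $t$.

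The key insight---and the main obstacle, inasmuch as there is one---is to recognise that $N^-(b_1) \setminus \{w\}$ serves as a large reservoir of $\Out(D, w)$-vertices, reducing the problem to a standard bipartite matching question that is readily handled by Hall's theorem. A purely greedy extension strategy, which repeatedly enlarges the antimatching edge by edge while drawing each new $a_k$ from $N^-(b_1)$, runs into trouble around $k \approx t/2$ because each matching edge consumes two elements of $N^-(b_1)$; passing to the Hall formulation sidesteps this by choosing all $t - 1$ edges at once, trading the greedy bookkeeping for a single verification of Hall's condition.
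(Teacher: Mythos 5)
Your Hall's-theorem argument correctly produces a matching of the \emph{underlying} graph saturating $A_0$, but it forgets that the antimatching edges $a_ib_i$ carry a direction: throughout the paper $a_ib_i$ denotes the edge oriented from $a_i$ to $b_i$. This orientation is load-bearing, not cosmetic. In the paper's own proof of this lemma, the step ``the in-neighbourhood of $b_1$ is a subset of $\Out(C,a_1)$'' uses $a_1\to b_1$ to place $b_1\in \In(C,a_1)$; and in Section~5 the embedding sends out-vertices of the antitree to $C_{a_j}^2$ and in-vertices to $C_{b_j}^2$, which only works because the cluster edge runs $C_{a_j}\to C_{b_j}$. So the required object is a matching of \emph{directed} edges $a_i\to b_i$ with every tail in $\Out(D,w)$, not merely vertex-disjoint undirected edges touching $\Out(D,w)$.

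Your construction does not ensure this. If the matched edge is $b_i\to a_i$ with $a_i\in A_0$, the tail is $b_i\in V_0$, and there is no reason for $b_i$ to lie in $\Out(D,w)$. Concretely, take $D$ to be the $t$-blow-up of the directed triangle on parts $X_1,X_2,X_3$, with $w\in X_1$, $b_1\in X_2$, and $A_0\subseteq X_1$. Every vertex of $A_0$ is adjacent (in the underlying graph) to all of $X_3\subseteq V_0$, so Hall's theorem may return a matching of $A_0$ entirely into $X_3$. But those edges are oriented from $X_3$ into $X_1$, and $X_3\cap\Out(D,w)=\emptyset$ because out-out-walks from $w$ never leave $X_1\cup X_2$. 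Restricting $H$ to out-edges of $A_0$ does not repair this: each $a\in A_0$ has $|N^+(a)|\ge t$, but all $t$ of these can lie inside $\{w,b_1\}\cup (A_0\setminus\{a\})$, so the out-degree into $V_0$ can be $0$ and Hall's condition fails already for a singleton. The paper's proof avoids the issue entirely: it takes a connected antimatching of maximum size, grows a maximal anticonnected subdigraph $C$ around it, and derives a contradiction from $|V(C)|\ge 2t$ via a chain of maximality arguments that track the orientations at every step. That bookkeeping is precisely what your reduction to an undirected bipartite matching discards.
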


\begin{proof}
Let $M=\{a_ib_i\}_{1\leq i\leq m}$ be a connected antimatching of maximum size in $D$ with the property that $w=a_1$. Note that $m \geq 1$, because any single edge constitutes a connected antimatching. For contradiction, we assume that $m < t$. 
Let $C$ be the largest induced subdigraph of $D$ such that \begin{enumerate}[(i)]
    \item $M\subseteq E(C)$
   \item  $a_i\in \Out(C,a_1)$ for every $1\leq i\leq m$
   and 
    \item $(C, a_1)$ is anticonnected.
\end{enumerate}
We claim that
\begin{equation}
\label{c1}
\parbox[c]{0.9\linewidth}{if $v\in \In(C,a_1)$ then $N^-(v)\subseteq V(C)$, and if  $v\in \Out(C,a_1)$ then $N^+(v) \subseteq V(C)$.}
\end{equation}

Indeed, let $v\in \In(C,a_1)$ and suppose that there is a vertex $x\in N^-(v)\setminus V(C)$. By definition of $\In(C,a_1)$, the digraph $C$ contains an out-in-walk $P_v$ from $a_1$ to $v$. Then $P_v$ and the edge $xv$ form an out-out-walk from $a_1$ to $x$. Thus, adding the vertex $x$ and the edge $xv$ to $C$,  we obtain a subdigraph $C'$ of $D$ that is larger than $C$ and fulfills
(i)-(iii), a contradiction. The proof for $v\in \Out(C,a_1)$ is analogous. This proves \eqref{c1}. 

Now, we show that
\begin{equation}
\label{c2}
\parbox[c]{0.9\linewidth}{if $v\in \In(C,a_1)\setminus V(M)$ then $N^-(v)\subseteq V(M)$, and if $v\in \Out(C,a_1)\setminus V(M)$ then $N^+(v) \subseteq V(M)$.}
\end{equation}

In order to see \eqref{c2}, let $v\in \In(C,a_1)\setminus V(M)$, and let $x\in N^-(v)$. By \eqref{c1}, we know that $x\in V(C)$. For contradiction, assume $x\notin  V(M)$. 
Define $a_{m+1} := x$, $b_{m+1} := v$ and set $M':=\{a_ib_i\}_{1\leq i\leq m+1}$. Consider an out-in-walk from $a_1$ to $v$ and add the edge $xv$ at the end to see that $x\in \Out(C,a_1)$. So $M'$ is a connected antimatching of size $m+1$, with $w=a_1$, a contradiction to the choice of $M$. 
We can proceed similarly for $v\in \Out(C,a_1)\setminus V(M)$. This proves \eqref{c2}.

Next, we claim that
\begin{equation}
\label{justone}
|V(C)\setminus V(M)| \leq 1.
\end{equation}
To prove \eqref{justone} by contradiction, we start by considering a vertex $u$
in $V(C)\setminus V(M)$. Since $\delta^0(D) \geq t$ and because of \eqref{c2}, we know that if $u\in \In(C,a_1)$, then $u$  has at least $t$ in-neighbours in $V(M)$, and if $u\in \Out(C,a_1)$, then $u$ at least $t$ out-neighbours in $V(M)$. Now suppose there are two distinct vertices in $V(C)\setminus V(M)$. Since we assume that $|M|<t$,  there are  distinct $u_1,u_2\in V(C)\setminus V(M)$,  edges $e_1, e_2\in E(C)$, an index $j\le t-1$ and an edge $v_1v_2:=a_jb_j\in M$ such that  for each $i=1,2$ one of the following holds:
\begin{itemize}
    \item $u_i\in \In(C,a_1)$ and $e_i=v_iu_i$, or
    \item $u_i\in \Out(C,a_1)$ and $e_i=u_iv_i$, or
\end{itemize}

If possible, choose $j\neq 1$, and if that is not possible try to choose $u_1$ in a way that $e_1=a_1u_1$.

Obtain $M'$ from $M$ by adding $e_1$ and $e_2$ to $M$ and removing $e_3$. Note that for $i=1,2$, if $e_i=u_iv_i$ then $u_i\in \Out(C,a_1)$, and furthermore, if $e_i=v_iu_i$ then $u_i\in \In(C,a_1)$, which means that $u_i$ is reachable  by an out-in-walk from $a_1$, to which we can add the edge $v_iu_i$ to obtain an out-out-walk from $a_1$ to $v_i$, implying that $v_i\in \Out(C,a_1)$. 
So, in the case that $j\neq 1$, 
and in the case that $j=1$ and $e_1=v_1u_1$,
we know that $M'$ is  a  connected antimatching which is larger than $M$, with $w$ being the outvertex of a matching edge we can call $a_1b_1$. This is a contradiction to the choice of $M$. Thus $j=1$ and $e_1=u_1a_1$.

Note that the fact that $j=1$ implies that  both $u_1$ and $u_2$ have only $t-2$ out- or in-neighbours  in $V(M)\setminus\{a_1, b_1\}$, where we count out-neighbours for $u_i$ if $u_i\in \Out(C,a_1)$ and in-neighbours otherwise. Thus both are connected by edges to both of $a_1, b_1$. Since we would have chosen $e_1=a_1u_1$ if that was possible for a choice of $u_1$, we know that both $u_1$, $u_2$ are in-neighbours of $a_1$, in fact, $a_1$ has no out-neighbours outside $V(M)$. So, because of our bound on the minimum semidegree,
$a_1$ has at least $t-1$ out-neighbours in $V(M)\setminus\{a_1, b_1\}$.
As the outdegree of $u_1$ into $V(M)\setminus\{a_1, b_1\}$ is at least $t-2$,   we  find an edge $a_jb_j\in M$, with $j\neq 1$, such that $a_1a_j, u_1b_j\in E(D)$ or $a_1b_j, u_1a_j\in E(D)$. Adding these two edges and $e_2$ to $M$ while deleting $a_1b_1$ and $a_jb_j$ gives a connected antimatching $M''$, which is larger than $M$, and where $a_1=w$ (choosing the corresponding edge as $a_1b_1$).
Therefore, inequality~\eqref{justone} holds. 

By \eqref{justone}, and since we assume $M$ to have at most $t-1$ edges, we see that
\begin{equation}
\label{c3}
|V(C)|<2t.
\end{equation}

Now, by \eqref{c1}, the out-neighbourhood of $a_1$ is contained in $V(C)$, and thus is a subset of $\In(C,a_1)$. Similarly, the in-neighbourhood of $b_1$ is a subset of $\Out(C,a_1)$. Considering the minimum semidegree of $D$, we deduce that $$|\In(C,a_1)|, |\Out(C,a_1)| \geq t.$$ By \eqref{c3}, we conclude $\In(C,a_1) \cap \Out(C,a_1) \neq \emptyset$ and thus contains  a vertex $v$. 
Because of~\eqref{c1}, both the in-neighbourhood and the out-neighbourhood of $v$ are contained in $C$. As each of these two disjoint sets has at least $t$ elements,  $C$ has at least $2t$ vertices, a contradiction to \eqref{c3}.

\end{proof}

For the next lemma, we need another definition.
\begin{definition}[out-out-distance $\dist(a,a')$; out-in-distance $\disti(a,a')$]
In an oriented graph $D$,  the \emph{out-out-distance} $\dist(a,a')$ between two vertices $a, a'\in V(D)$ is the length of the shortest out-out-walk from $a$ to $a'$, if such a walk exists. Otherwise  $\dist(a,a')=\infty$.\\
Similarly, the \emph{out-in-distance} $\disti(a,a')$ between two vertices $a, a'\in V(D)$ is the length of the shortest out-in-walk from $a$ to $a'$, if such a walk exists, and otherwise  $\disti(a,a')=\infty$.
\end{definition}
Observe that $\dist(a,a)=0$ and for edges $ab,a'b'$ of   a connected antimatching,  $\dist(a,a')$ is finite by definition.
For use in the proof of the following lemma, we define 
$\dist(M)=\sum_{i=2}^{d} \dist(a_1, a_i)$, where  $M = \{a_ib_i\}_{1\leq i\leq t}$ is a connected antimatching.

\begin{lemma}
\label{l6}
Let $t\in\mathbb N$ and let $D$ be an oriented graph with $\delta^0(D)\geq t$. Let $w\in V(D)$. Then $D$ contains a connected antimatching $M = \{a_ib_i\}_{1\leq i\leq t}$, with $w=a_1$, and such that for every $1\leq i\leq t$, $\dist(a_1, a_i) \leq 8t$.
\end{lemma}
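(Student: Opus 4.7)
My plan is to apply Lemma~\ref{l5} to obtain a size-$t$ connected antimatching $M=\{a_ib_i\}_{1\le i\le t}$ in $D$ with $a_1=w$, and among all such antimatchings to select one minimising $\dist(M):=\sum_{i=2}^{t}\dist(a_1,a_i)$. Assuming for contradiction that $\dist(a_1,a_j)>8t$ for some $j$, I will produce a local exchange on $M$ that strictly decreases $\dist(M)$.

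The key tool is a shortcut lemma for antiwalks: in a shortest antiwalk, no vertex may occupy two positions of the same parity, for otherwise one could delete the intervening block while preserving the direction alternation and producing a valid shorter antiwalk of the same type (first/last edge direction is unchanged since the deleted length is even). Since every prefix of a shortest antiwalk is itself shortest (else one could splice a shorter prefix with the original suffix, contradicting overall shortness), the same statement holds for prefixes. Consequently, if $P=v_0v_1\cdots v_\ell$ is a shortest out-out-walk from $a_1$ to $a_j$ (so $\ell\ge 8t+2$, as such lengths are even), then on the prefix $P''$ of length $8t$ the $4t+1$ even positions hold $4t+1$ distinct vertices (call this set $U$), and the $4t$ odd positions hold $4t$ distinct vertices (call this set $W$). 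Moreover, each of the $8t$ edges of $P''$ is directed from its even-position endpoint to its odd-position endpoint in $D$, as even-position vertices are out-vertices of the antiwalk.

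Next I run a double count on the bipartite multigraph $H$ on $U\cup W$ whose edges correspond to these $8t$ edges. Every vertex of $H$ has degree at most $2$ (the at most two adjacent edges on $P''$). Removing the at most $2|V(M)\cap U|\le 4t$ edges with $U$-endpoint in $V(M)$ and the at most $2|V(M)\setminus\{a_j,b_j\}|\le 4t-4$ edges with $W$-endpoint in $V(M)\setminus\{a_j,b_j\}$ still leaves at least $8t-4t-(4t-4)=4$ edges $uv$ of $P''$ with $u\notin V(M)$ and $v\notin V(M)\setminus\{a_j,b_j\}$. Pick any such edge $uv$. Since $u$ lies at an even position of $P$ at most $8t$, the prefix of $P$ ending at $u$ gives $\dist(a_1,u)\le 8t<\dist(a_1,a_j)$, and in particular $u\in\Out(D,a_1)$.

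Finally, set $M':=(M\setminus\{a_jb_j\})\cup\{uv\}$. The conditions on $u$ and $v$ make $M'$ a matching of size $t$, and since $u\in\Out(D,a_1)$ it is a connected antimatching with $a_1=w$; but $\dist(M')=\dist(M)-\dist(a_1,a_j)+\dist(a_1,u)<\dist(M)$, contradicting the choice of $M$. The main obstacle lies in arranging the counting in the prefix so as to simultaneously avoid both ``bad'' events (the new edge clashing with a surviving matching vertex, and $u$ being too far from $a_1$); the bounds $4t$ and $4t-4$ on these bad events are essentially tight for this strategy, which is precisely why the prefix length $8t$ (equivalently, the bound $8t$ in the statement) is needed.
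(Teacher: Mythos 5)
Your proof is correct and takes essentially the same route as the paper: choose a connected antimatching minimising the total out-out-distance, use the no-repeated-parity property of shortest antiwalks (the paper's claim that each vertex appears at most once as an in-vertex and at most once as an out-vertex on $P$, which is your shortcut lemma), count that some edge of the short walk avoids $V(M)$, and do the exchange to strictly decrease the total distance. Your bookkeeping differs slightly --- you count on the prefix of length exactly $8t$ and relax the condition on the odd endpoint to allow $v\in\{a_j,b_j\}$, whereas the paper counts on the whole walk of length $>8t$ and requires both endpoints of the new edge to avoid $V(M)$ --- but this is a minor variation of the same argument.
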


\begin{proof}
By Lemma \ref{l5}, we know that $D$ contains a connected antimatching $M'=\{a'_ib'_i\}_{1\leq i\leq t}$  with $w=a'_1$. Among all such antimatchings  choose $M=\{a_ib_i\}_{1\leq i\leq t}$   such that 
$\dist(M)$ 
is minimised.
For the sake of contradiction, suppose there exists~$k$ with $2\le k\le t$ such that $\dist(a_1, a_k)> 8d$. Let $P$ be 
a shortest out-out-walk  from $a_1$ to $a_k$. We claim that
\begin{equation}
\label{c4}
\parbox[c]{0.9\linewidth}{each  $v\in V(D)$ appears at most once as an in-vertex and at most once as an out-vertex on $P$.}
\end{equation}
Indeed, suppose vertex  $v$  appears at least twice as an in-vertex  on $P$. Then there are distinct vertices $x,y\in V(D)$ such that  $P=P_1xvP_2yvP_3$ (where the each of paths $P_i$ is allowed to be empty). The antiwalk $P_1xvP_3$ is shorter than $P$, a contradiction to the choice of $P$. We can argue similarly if $v$ appears twice on $P$ as an out-vertex. This proves \eqref{c4}. 

From \eqref{c4} it follows that $P$ does not repeat edges. Therefore, and since we assumed that $P$ has length greater than $8t$, we know that
\begin{equation}
\label{c5}
|E(P)| > 8t.
\end{equation}

By \eqref{c4}, every $a_i$ is incident to at most 4 edges in $P$. The same holds for every~$b_i$. So at most $8t$ edges have one of their extremes on $M$, and hence, by \eqref{c5}, there is an edge $xy$ on $P$ such that $x,y\notin V(M)$. 
Replacing $a_kb_k$ with $xy$ in $M$, we obtain a connected antimatching $M'$ of size $t$ with $\dist(M')<\dist(M)$, a contradiction to the choice of $M$.\qedhere
\end{proof}


\section{Antitrees: The proof of Theorem \ref{teo:teo1}}
\label{proofteo}

Instead of Theorem \ref{teo:teo1}, we will prove a slightly stronger result, namely Theorem \ref{teo:teo1'} below.
The additional properties of Theorem \ref{teo:teo1'} will be necessary for the proof of Theorem~\ref{ES}.

Let us define the following shorthand notation.

\begin{definition}\label{gamma-emb}
For digraphs $A$ and $D$, we write $A\subseteq_\gamma D$
if for each set $V^*\subseteq V(D)$ of size at least $\gamma |V(D)|$ and for each $x\in V(A)$, there is an embedding of $A$ in $D$ with $x$ mapped to $V^*$.
\end{definition}

Here is the result that immediately implies  Theorem \ref{teo:teo1}.
\begin{theorem}
\label{teo:teo1'}
For all $\eta\in(0,1)$, $c\in \mathbb N$ there is $n_0$ such that for all $n\geq n_0$ and $k\geq \eta n$ the following holds for every oriented graph $D$ on $n$ vertices 
and every  balanced antidirected tree $T$ with $k$ edges.
If $\delta^0 (D)>(1 + \eta)\frac k2$ and $\Delta(T)\leq (\log(n))^c$, then 
$T\subseteq _\eta D$.
\end{theorem}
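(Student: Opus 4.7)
The plan is to mirror the sketch for Theorem \ref{teo:teo2} in Section \ref{sketch}, modified so that the distinguished root $x\in V(T)$ lands in the prescribed set $V^*\subseteq V(D)$. Pick constants $\varepsilon\ll d\ll\alpha\ll\beta\ll\eta$, apply Lemma \ref{direg} to $D$ with parameters $\varepsilon,d$ and pass to the $(\varepsilon,d)$-reduced oriented graph $R$ given by Lemma \ref{lreduc}; it has a bounded number $k_R\le M_0$ of clusters of common size $m=\Theta(n)$, and minimum semidegree $\delta^0(R)\ge(1+\eta/2)\frac{k}{2m}$. Since $|V^*|\ge\eta n$ and the exceptional set of the partition has size at most $\varepsilon n$, pigeonhole produces a cluster $C^*\in V(R)$ with $|C^*\cap V^*|\ge(\eta-\varepsilon)m$.

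Next, apply Lemma \ref{l6} inside $R$ with $w=C^*$ and $t:=\lceil(1+\eta/4)\frac{k}{2m}\rceil$, obtaining a connected antimatching $M=\{a_ib_i\}_{i\le t}$ with $a_1=C^*$ and $\dist(a_1,a_i)\le j:=8t$ for every $i$. Root $T$ at $x$ (after inverting every orientation of $D$ if $x\in V_\text{in}(T)$, so that $x$ plays the role of an out-vertex of $T$), and apply Lemma \ref{l8} to obtain a $\beta$-decomposition $(W,\mathcal{T})$ of $T$, so that $x=r(T)\in W$. For each $S\in\mathcal{T}$, shave off its first $j$ levels; Lemma \ref{l9} guarantees, using $\Delta(T)\le(\log n)^c$ and $k\ge\eta n$, that the in- and out-leaf counts $(p_S,q_S)_{S\in\mathcal{T}}$ of the shaved pieces are balanced up to an $\alpha$-factor. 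The choice of $t$ ensures hypothesis~(c) of Lemma \ref{l10}, which then yields an assignment $S\mapsto i(S)\in[t]$ such that each cluster $a_i$ (respectively $b_i$) receives at most $(1-7\alpha)m$ vertices from the shaved pieces allotted to it.

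The embedding itself proceeds in BFS order from $x$. Place $x$ into a vertex of $V^*\cap C^*$ that is typical with respect to the neighbouring clusters in $R$ it will immediately reach; since $|V^*\cap C^*|\ge(\eta-\varepsilon)m$ while the atypical vertices number at most $\varepsilon m$ per reference cluster, such a vertex exists. Thereafter, to embed a piece $S\in\mathcal{T}$ whose parent has been placed into some cluster $C$, choose in $R$ an antiwalk of length at most $j$ of the correct initial parity from $C$ to the assigned matching edge $a_{i(S)}b_{i(S)}$; such an antiwalk exists by Lemma \ref{l6} combined with the semidegree of $R$. Invoke Lemma \ref{lembedding} to embed the first $j$ levels of $S$ along this antiwalk and the remaining levels alternately in $a_{i(S)}$ and $b_{i(S)}$, each time drawing on unused typical vertices. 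The central vertices in $W$ linking successive pieces are handled by the same mechanism along similarly short antiwalks.

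The main obstacle is the bookkeeping, i.e.\ ensuring that each cluster retains, at every step, a linear slack of typical unused vertices of the required parity. The bulk of $T$ is absorbed by $V(M)$ via Lemma \ref{l10}, which leaves $\ge 7\alpha m$ free in each cluster of $V(M)$. The vertices embedded \emph{outside} $V(M)$---those on the antiwalks and those inside the shaved caps $L_j(T)$---total at most $|W|\cdot \Delta^{j+1}=O((\log n)^{c(j+1)})$, a quantity much smaller than $\varepsilon m$ because $j$ is bounded independently of $n$ while $m=\Theta(n)$. So no cluster is overwhelmed, the greedy procedure goes through, and we obtain $T\subseteq_\eta D$. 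The only novel point beyond a proof of Theorem \ref{teo:teo1} without the $V^*$ constraint is the prescription $a_1=C^*$ in the antimatching, which is precisely the flexibility that Lemma \ref{l5} and Lemma \ref{l6} were stated to provide.
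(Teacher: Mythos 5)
Your proposal follows essentially the same route as the paper's proof: diregularity, a connected antimatching rooted at the cluster $C^*$ rich in $V^*$ via Lemma~\ref{l6}, a $\beta$-decomposition of $T$ with Lemma~\ref{l9} controlling the balance after shaving, Lemma~\ref{l10} for allocating the shaved pieces to matching edges, and Lemma~\ref{lembedding} to embed along connecting antiwalks. One step is imprecise, though: you claim the connecting antiwalk from the parent's cluster $C$ to the assigned edge $a_{i(S)}b_{i(S)}$ has length at most $j=8t$ ``by Lemma~\ref{l6} combined with the semidegree of $R$'', and accordingly you shave only the first $8t$ levels of each $S$. Lemma~\ref{l6} only bounds $\dist(a_1,a_{i(S)})\le 8t$; what you actually need is a walk from the \emph{current} cluster $C$ to $a_{i(S)}$, and since $C$ is itself (by the invariants maintained during embedding) only guaranteed to be within out-out-distance $8t$ of $a_1$, the walk you can guarantee has length up to $16t$, plus a couple of extra steps for padding to fix parity and to attach $C_{b_{i(S)}}$ at the end. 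The paper therefore shaves $16t+2$ levels. This is easily repaired in your scheme---replacing $j=8t$ by $16t+2$ throughout changes nothing qualitatively, since $t\le M_0$ is bounded, $\Delta(T)^{O(t)}$ remains polylogarithmic, and Lemma~\ref{l10} is only helped by shaving more---but as written the claim ``such an antiwalk exists by Lemma~\ref{l6}'' is not justified, and the discrepancy between the shaving depth and the actual walk length needs to be acknowledged when verifying that the $C^1$-slices are not overwhelmed.
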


In the remainder of this section, we prove Theorem \ref{teo:teo1'}. 

\begin{proof}[Proof of Theorem \ref{teo:teo1'}] We will first define our constants, then prepare the given digraph $D$ and the antitree $T$, and finally proceed to embed $T$ in $D$.

\paragraph{Setting the constants.}
We define our constants $\varepsilon, \beta$ and $n_0$ so that $$\frac {1}{n_0} \ll \beta \ll \varepsilon \ll \eta<1.$$
More precisely, given $\eta$ and $c$, we set $\varepsilon := \frac{\eta^2}{10^5}$. Lemma \ref{direg} with input $\varepsilon$ and $m_0:=\lceil \frac 1\varepsilon\rceil$  gives constants $n'_0$ and $M_0$ such that we can apply the lemma to digraphs on $n\geq n'_0$ vertices.
Set $\beta:=\frac{\varepsilon}{100 M_0}$, and choose $n_0\ge n'_0$ such that 
\begin{equation}\label{n0nice}
(\log n_0)^{18 M_0c}\leq  \frac{\beta^3}{10}n_0.
\end{equation}
Finally, 
let  $n\geq n_0$ and let $k\geq\eta n$. 

\paragraph{Objective.} 
Let $D$ be an oriented graph $D$ on $n$ vertices, with $\delta^0 (D)>(1 + \eta)\frac k2$. Let $T$ be a balanced antidirected tree $T$ with $k$ edges, with $\Delta(T)\leq (\log(n))^c$. 
We need to show that 
 for every  every $V^*\subseteq V(D)$ with $|V^*|\ge\eta n$, the antitree $T$ can be embedded in $D$, with $x$ embedded in $V^*$.

So let such $V^*\subseteq V(D)$ and $x\in V(T)$ be given. 
Note that we can assume that $x\in \vout(T)$, as otherwise we could switch the orientations of $D$ and of $T$, which would move $x$ to $\Out(T)$. Once $T$ is embedded, we switch all orientations back to normal.

\paragraph{Preparing the oriented graph $D$.}
We apply Lemma \ref{direg} with  $d=2\sqrt{\varepsilon}$ to obtain~$D'$, a digraph with $r\leq M_0$ clusters $C_1, ..., C_r$ of the same size $m$, and with $\delta^0(D') > (1+\frac \eta2)\frac k2$. We divide each cluster of $D'$ into two slices, $C_i^1$  and $C_i^2$ of sizes 
\begin{equation}
\label{slices}
|C_i^1|=\lfloor 10\sqrt{\varepsilon} |C_i|\rfloor\text{ and } |C_i^2|=\left\lceil(1- 10\sqrt{\varepsilon})|C_i|\right\rceil.
\end{equation}

Note that at least one of the clusters $C_i$ contains at least $\eta|C_i|$ vertices from $V^*$. Let $C^*$ be such a cluster.

Let $R$ be the reduced oriented graph of $D'$ given by Lemma \ref{lreduc}, on vertices $C_1, ..., C_r$, of minimum semidegree greater than $$t:=\lceil(1+\frac{\eta}{2})\frac{kr}{2n}\rceil.$$ By Lemma \ref{l6}, $R$ has a connected antimatching $M=\{a_ib_i\}_{i=1}^t$ such that 
\begin{equation}
\label{c12}
\text{$\dist(a_1, a_j)\leq 8t$ for each $j\in[t]$,}
\end{equation}
and such that $C^*=C_{a_1}$,
where here and later we  write $C_{a_i}$ (resp.~$C_{b_i}$) for the cluster corresponding to $a_i$ (resp.~$b_i$) in our reduced oriented graph $R$, for every antimatching edge $a_ib_i\in M$.

\paragraph{Preparing the antitree $T$.}
Note that because of~\eqref{n0nice}, and since we assume that $\Delta(T)\le\log n$, 
\begin{equation}\label{knice}
k\ge \eta n\ge 10\beta^{-2}(\Delta(T))^{18M_0}.
\end{equation}

We root $T$ at the $x$.
By Lemma \ref{l8}, there is a $\beta$-decomposition $(W,\mathcal{T}')$ of the underlying undirected tree of $T$, with $|W|<\frac 1\beta + 2\le\frac 2\beta$. Let $\mathcal{T}$ denote the set of oriented subtrees of $T$ corresponding to $\mathcal{T}'$. For each $S\in\mathcal{T}$, we define 
$$(p_S, q_S):=\big(|V_\text{in}(S) - Lev_{16t+2}(S)|, |V_\text{out}(S) - Lev_{16t+2}(S)|\big ).$$

We claim that the family $(p_S, q_S)_{S\in\mathcal{T}}$ satisfies the conditions of Lemma \ref{l10} with  $\sqrt{\varepsilon}$ playing the role of $\alpha$. Indeed, Lemma \ref{l10}$(a)$ holds by Lemma \ref{l9},  because of~\eqref{knice}, and since $t\ge M_0$.
As $(W,\mathcal{T}')$ is a $\beta$-decomposition, and by our choice of $\beta$, we know that $$p_S+q_S \leq |S| \leq \beta k \leq \beta n  \leq \frac{\sqrt{\varepsilon}}2m\le \sqrt\varepsilon |C_{a_1}^2|$$
for each $S\in\mathcal T$, and thus,  Lemma \ref{l10}$(b)$ holds. Finally Lemma \ref{l10}$(c)$ is true because $T$ is balanced, and thus, by our choice of $t$, and since $|W|\ge 1$,
\begin{align*}
\max\{\sum_{S\in\mathcal{T}} p_S, \sum_{S\in\mathcal{T}} q_S\} 
&\leq \frac{k}{2}
\le (1-10\sqrt\varepsilon)^2(1-\varepsilon)(1+\frac\eta 2)\frac k2 \\
&\le (1-10\sqrt\varepsilon)^2(1-\varepsilon)\frac n{r} t \le (1-10\sqrt\varepsilon)|C_{a_1}^2|t.   
\end{align*}
So Lemma \ref{l10}, gives a partition $\{P_j\}_{j=1}^{t}$ of $\mathcal{T}$, such that for every $j\in[t]$, 
\begin{equation}
\label{c15}
\max \{\sum_{i\in P_j} p_i, \sum_{i\in P_j} q_i \}\leq (1-7\sqrt{\varepsilon})|C_{a_1}^2|.
\end{equation}

For convenience, let us also define, for each $S\in\mathcal T$, the set $W_S$. This set contains all vertices from $W$ whose path to $r(T)=x$ passes through $S$ before passing through any other $S'\in\mathcal T$. That is, $W_S$ contains all children of $S$ in $W$, as well as their children in $W$, etcetera. Similarly, we define $W_{r(T)}$ as the set of all vertices from $W$ whose path to $r(T)$ does not meet any tree from $\mathcal T$.

\paragraph{Idea of the embedding procedure.}
The process starts with embedding $r(T)=x$ into  $C^*$. Then we embed the vertices from $W_{r(T)}$ into $C_{a_1}^1\cup C_{b_1}^1$.

After that, in every step of the process, we will embed some $S\in\mathcal{T}$, say with $S\in P_j$, together with the set $W_S$. We choose $S$ such that  the parent of its root $r(S)$ is already embedded, say in  cluster~$C$. As $C$ itself or a neighbouring cluster $C'$ lies at out-out-distance at most $8t$ to $C_{a_1}$ in $R$, 
there is a short antiwalk $P$ from $C$ or from $CC'$ to $C_{a_j}$. We  embed   the first $16t+2$ levels of $T[S\cup W_S]$ into the $Q^1$-slices of clusters $Q$ of $P$, and the rest into $C_{a_j}^2, C_{b_j}^2$. 

The bound on the maximum degree of $T$ ensures that only few vertices in total go to clusters from connecting antiwalks $P$. Therefore, $T$ will be mainly embedded into the edges from $M$, where we have control on how space is allocated.

\paragraph{The embedding.}
Let us make this sketch more precise. Let $V_1 = \bigcup_{i=1}^r C_i^1$ and $V_2 = \bigcup_{i=1}^r C_i^2$. Embed $r(T)=x$ in a vertex of $C^*\cap W=C_{a_1}\cap W$ that is typical to $C_{b_1}^1$. 
We next embed all of $W_{r(T)}$ into $C_{a_1}^1\cup C_{b_1}^1$. This can be done levelwise, in each step choosing vertices that are typical with respect to $C_{a_1}^1$ or $C_{b_1}^1$, respectively. As $W_{r(T)}$ has at most $\frac 2\beta $ vertices, we can embed all of $W_{r(T)}$ without a problem.

We now go through the antitrees $S\in \mathcal{T}$, and embed 
$S$ together with  $W_S$  in $ W$. We do this in an ordered way, so that when starting to work with $S$, the parent of $r(S)$ is already embedded.  
We will show that at every step of the process, i.e.~for each $S\in\mathcal{T}$,  the following conditions are met:
\begin{enumerate}[(a)]
\item \label{a}  $Lev_{16t+2}(T[S\cup W_S])$ is embedded into $V_1$, and the rest of $T[S\cup W_S]$ is embedded  into $V_2$,
\item \label{b} if $S\in P_j$, then $V_\text{out}(T[S\cup W_S])\setminus Lev_{16t+2}(T[S\cup W_S])$ is embedded into $C_{a_j}^2$ and $V_\text{in}(T[S\cup W_S])\setminus Lev_{16t+2}(T[S\cup W_S])$ is embedded into $C_{b_j}^2$, 
\item \label{c} every vertex in  $V_\text{out}(T[S\cup W_S])$ is embedded in a cluster $C_v$ corresponding to a vertex $v$ with $\dist (a_1,v)\le 8t$,
\item \label{c'} every vertex in $V_\text{in}(T[S\cup W_S])$ is embedded in a cluster $C_v$ corresponding to a vertex $v$ with $\disti (a_1,v)\le 8t$, and
\item \label{d} for every $w\in W$, if the image of $w$ lies in cluster $C$ then it is typical  with respect to  $(C')^1$ for some neighbour~$C'$ of $C$.
\end{enumerate}

Assume now we  are in a step of the process and about to embed $T[S\cup W_S]$ for some $S\in \mathcal{T}$. 
Let $j\in[t]$ be such that $S\in P_j$. Observe that the parent of $r(S)$ is already embedded in a  vertex $w$ of a cluster $C$ so that $w$ is typical to $(C')^1$ for some neighbour~$C'$ of cluster~$C$, by \eqref{d}. 

First assume that  $r(S)\in V_\text{out}(S)$. 
Then the parent of $r(S)$ is an in-neighbour of $r(S)$.  Because of \eqref{c} and \eqref{c12}, there is  an out-out-walk $P''$ of (even) length at most $16t$ starting at $C$ and ending at $C_{a_j}$. We add the cluster $C_{b_j}$ at the end and the cluster $C'$ at the beginning to obtain an antiwalk $P'$ of length at most $16t+2$. We obtain an antiwalk $P$ of length exactly $16t+2$ by repeating two subsequent vertices of $P'$ an appropriate number of times. Note that $P$ and $T[S\cup W_S]$ are consistent. 

Now, if $r(S)\in V_\text{in}(S)$, we proceed analogously, only that here, $P''$ is an in-in-walk of length at most $16t-1$, and when constructing $P'$, we add the antiwalk $C_{b_j}C_{a_j}$ at the end. That gives an in-out-walk $P$ of length exactly $16t+2$ that is consistent with $T[S\cup W_S]$. 

Observe that $|S\cup W|<\frac\varepsilon{10} m$.
Set $h:=16t+2$,
and let $X_{h-1}$ and $X_{h}$ be the sets of all unoccupied vertices of the last two clusters on $P$. By \eqref{a}, the only vertices embedded in $C_{a_j}^2\cup C_{b_j}^2$ are vertices from $W$ and antitrees from $P_j$ without their first $16t+2$ levels. By \eqref{slices} and \eqref{c15}, we conclude that 
 \begin{equation}\label{enough}
 |X_{h-1}|, |X_h|>3\sqrt{\varepsilon} m. 
\end{equation}

We will use  Lemma \ref{lembedding} to embed $T[S\cup W_S]$  into $P$. For this, let $Z_0$ be the set of all unused out-neighbours  (if $r(S)\in V_\text{out}(S)$) or in-neighbours (if $r(S)\in V_\text{out}(S)$)  of $w$ in $C'$. For $1\le i\leq h$, let $Z_i$ be the set of all  unused vertices of $Q^1$, where $Q$ is the $(i+1)$th vertex on $P$. By \eqref{a}, any used vertex in $Q^1$ lies in $L_{16t+2}(T)\cup W$, and this set contains at most
$$ |W|\cdot \Delta(T)^{18t} \leq \frac 2\beta \cdot (\Delta(T))^{18  r\frac kn} \leq \frac 2\beta \cdot(\Delta(T))^{18 M_0}
\le \frac{\beta }5 k
\le \frac {\varepsilon}{500 M_0} n
    \leq \varepsilon m$$
vertices, where we used~\eqref{knice} for the third inequality. So,  since by \eqref{slices}, 
 the neighbourhood of $w$ in $(C')^1$ is at least $d\cdot |(C')^1|\ge 2\sqrt\varepsilon\cdot 9\sqrt\varepsilon|C'| \ge 18 \varepsilon m$, we conclude that 
$|Z_1|>3{\varepsilon}m$ and similarly, we see that 
$|Z_i|>3\sqrt{\varepsilon}m$ for $1\le i\leq h$. Thus, we can apply Lemma \ref{lembedding} to embed   $T[S\cup W_S]$. Observe that conditions \eqref{a}--\eqref{d} are satisfied after embedding $S$.

After embedding $T[S\cup W_S]$ for each $S\in\mathcal T$ in this way, we have embedded all of~$T$, with $x$ embedded  in $V^*$, which finishes the proof.
 \end{proof}

\section{Antisubdivisions of $K_h$: The proof of Theorem~\ref{teo:teo3}}
\label{sec:subdi}

The proof of Theorem \ref{teo:teo3} is  similar to the proof of Theorem \ref{teo:teo1}, with the difference that we only need the connected antimatching in the reduced oriented  graph, and then do the embedding `by hand'. 

\paragraph{Preparation.} We choose our constants as in the proof of Theorem \ref{teo:teo1}, in particular ensuring that $n_0\ge 100\varepsilon^{-1}M_0^3$. In addition, we set 
$$\gamma:=\frac{\varepsilon}{10M_0}.$$
We now prepare the oriented graph  in the same way as for Theorem \ref{teo:teo1}. In the reduced oriented graph of $D$, we use Lemma~\ref{l5} to find a connected antimatching $M=\{a_ib_i\}_{1\le i\le t}$ of convenient size $t$. 
We split each cluster $C$ into two slices $C^1$ and~$C^2$, where $C^1$ only contains about a $10\sqrt\varepsilon$ portion of the vertices of $C$.

Let a long $k$-edge antisubdivision $H$ of $K_h$ be given, that is, a set $X=x_1,\ldots, x_h$ of vertices, and antipaths $P_{i,j}$ connecting $x_i$ with $x_j$, for each $1\le i<j\le h$. For each two-edge path $P_{i,j}$ we add the middle vertex $x_{i,j}$ to $X$. Call the obtained set~$X'$. By assumption, $X'$ induces a forest in $K_h$. Choose  a subset of $$\{P_{i,j} : 1\le i<j\le h \land e(P_{i,j})\ge 3\}$$ so that together with $X'$, they induce a tree   in~$K_h$. For all other paths $P_{i,j}$
we choose a subpath $Q_{i,j}\subseteq P_{i,j}$ of length $3$. Let $\mathcal Q$ be the set of all such paths $Q_{i,j}$, and let $Y$ be the set of their endvertices. Note that $T=K_h-(\bigcup_{Q_{i,j}\in \mathcal Q}V(Q_{i,j})\setminus Y)$ is an antitree. We root $T$ at $x_1$. 

Let $Z$ be the set of all vertices on paths in $T$ between vertices of $X\cup Y$ that have length at most $M_0$.
Set $W=X'\cup Y\cup Z$, and note that
 $W$ naturally partitions into two sets: $W_{in}=W\cap \vin(H)$ and $W_{out}=W\cap \vout(H)$. Observe that there are at most $\frac{h^2}2$ paths $P_{i,j}$ of length at least $3$, and thus 
\begin{equation}\label{Wsmall}
|W|\le 2h+h^2+M_0h^2\le 4M_0h^2.
\end{equation}
 Also note that
 each component of $T- W$ is a long path (longer than $M_0$). Let $\mathcal P$ be the set of all these components. 

\paragraph{Embedding of $T$.}
We first give quick overview of the embedding of $T$, the details are given further below. First, we embed the root $x_1$ into $C_{a_1}\cup C_{b_1}$. Then  at each step, we embed either a vertex from $W$ or a path from $\mathcal P$ whose parent is already embedded. All vertices of $W$ are embedded into  $C_{a_1}^1\cup C_{b_1}^1$, and
the paths from $\mathcal P$ are embedded mainly into the $C^2$-slices of clusters corresponding to edges of $M$. For this, we use the edges of the antimatching $M$ (including $a_1b_1$ although this is not crucial) in an ordered way, and when one edge is sufficiently used, we go to the next edge of $M$. For the connections between $C_{a_i}\cup C_{b_i}$ and $C_{a_j}\cup C_{b_j}$ we use the slices $C^1$. At all times, vertices are embedded into typical vertices with respect to the unused part of the slice $C^2$ to be used in the next step, or with respect to the slice $C^1$ to be used in the future for children.
The details of the embedding of $W$ are given in the next paragraph, and the details for the paths from $\mathcal P$ are given in the subsequent paragraph.

The vertices of $W$ are embedded into $C_{a_1}^1\cup C_{b_1}^1$, as mentioned  above. 
Namely, each $w\in W$ (in particular $x_1$) is embedded into $C_{a_1}^1$ if $w\in W_{out}$ and $w$ is embedded into $C_{b_1}^1$ if $w\in W_{in}$. Every time we embed a vertex from $W$ into $C_{a_1}^1$, we make sure its image is typical   with respect to $C_{b_1}^1$, and every time we embed a vertex from $W$ into $C_{b_1}^1$, we make sure its image is typical  with respect to $C_{a_1}^1$. By~\eqref{Wsmall}, the set $W$ is small enough to easily fit into $C_{a_1}^1\cup C_{b_1}^1$. It is also much smaller than a typical neighbourhood in $C_{a_1}^1$ or in $C_{b_1}^1$, and therefore, it is not a problem if a vertex from $W$ is embedded much earlier than some of its children (as long as we ensure the children are embedded into $C^1$-slices). For a more precise analysis, see below.

Let us now turn to
the paths from $\mathcal P$. As mentioned above, these paths are embedded mainly into the $C^2$-slices of clusters corresponding to edges of the antimatching $M$. However, the first vertex $s$ of such a path $S$ goes to either $C_{a_1}^1$ or  $C_{b_1}^1$ (depending on the image of its parent), and we choose for the image of $s$ a vertex that is typical with respect to $C_0^1$ where $C_0$ is the first cluster on a path $P$ from $C_{a_1}$, or from  $C_{b_1}$, to the clusters $C_{a_j}$ and $C_{b_j}$ we plan to use for $S$. We embed the next vertices of $S$ along $P$, always into $C^1$-slices, always typical to the next slice until we reach $C_{a_j}\cup C_{b_j}$ (after at most $M_0$ steps). Then we switch to the slices $C_{a_j}^2$ and $C_{b_j}^2$. Now, every  vertex is embedded into a typical vertex with respect to the unused part of  $C_{a_j}^2$ or of $C_{b_j}^2$. If necessary, we move from $C_{a_j}\cup C_{b_j}$ to $C_{a_{j+1}}\cup C_{b_{j+1}}$ (using at most~$M_0$ vertices which are embedded into $C^1$-slices on a suitable path). We keep moving to subsequent  $C_{a_i}\cup C_{b_i}$'s if necessary. When we reach the last  vertices on~$S$, we start moving back to $C_{a_1}\cup C_{b_1}$, through the $C^1$-slices on a path of length at most~$M_0$. We reach $C_{a_1}\cup C_{b_1}$ and embed the last vertex of $S$ into a typical vertex with respect to $C_{a_1}^1$ or to $C_{b_1}^1$. If $S$ has a child in $W$, this child can be embedded now, or later in the process. 

 Let us analyse the embedding procedure to see that $T$ can indeed be embedded in this way. First, we note that the size of $M$ is sufficient, and the $C^2$-slices are large enough so that all the paths from $\mathcal P$ can be embedded without a problem. Second, we observe that we  used the $C^1$-slices exclusively for $W$, for the first $\le M_0$ and last $\le M_0$ vertices on a long path, and for moving from one edge $a_jb_j$ to the next edge $a_{j+1}b_{j+1}$. 
 So the number of vertices embedded into the $C^1$-slices is at most
$|W|+2M_0\cdot h^2+M_0\cdot t$. Because of~\eqref{Wsmall} and since $t\le M_0$, this number is bounded from above by 

$$6M_0h^2 + M_0^2 \le\varepsilon \frac n{8M_0}\le\frac\varepsilon 2 m.$$

So, whenever are about to embed a vertex into a $C^1$-slice, it is enough to know that the image of its parent was chosen typical with respect to the whole slice (as a typical neighbourhood in the slice has at least $18\varepsilon m$ vertices). We can thus embed all of $T$.

\paragraph{Embedding interior vertices on paths from $\mathcal Q$.}It only remains to embed the two middle vertices $q_{i,j}$, $q'_{i,j}$ of the paths $Q_{i,j}$, which we do successively, in any order. Say the neighbour of $q_{i,j}$ was embedded in $v\in C^1_{a_1}$, and the neighbour of  $q'_{i,j}$  was embedded in $v'\in C^1_{b_1}$. We consider the neighbourhood $N_1$ of the image of $v$ in the unused part of $C^1_{b_1}$ and the neighbourhood $N_2$ of the image of $v'$ in the unused part of $C^1_{a_1}$. Since at most $ \frac\varepsilon 2 m+h^2\le \varepsilon  |C^1_{a_1}|$ vertices of $C^1_{a_1}$ and $C^1_{b_1}$ have been used so far, we know that $N_1$ and $N_2$
 are sufficiently large  to ensure that there is an edge $ww'$ we can use. We complete the embedding by mapping $v$ and $v'$ to $w$ and $w'$.

\section{Edge density: The proof of Theorem \ref{ES}}\label{edge-dens}

In order to prove Theorem \ref{ES} we will need a  lemma that allows us  to rewrite the condition on the edge density as a condition on a parameter that is very similar to the semidegree, and which we will call the minimum pseudo-semidegree. Define the {\it minimum pseudo-semidegree 
$\bar\delta^0(D)$ }
of a  digraph $D$ with at least one edge as the minimum $d$ such that for each vertex $v\in V(D)$ we have $\delta^+(v), \delta^-(v)\in \{0\}\cup [d,\infty)$. The minimum pseudo-semidegree 
of an empty  digraph is $0$.
\begin{lemma}\label{pseu}
Let $k\in \mathbb N^+$. If a digraph $D$ has more than $(k-1)|V(D)|$ edges, then it contains a digraph $D'$ with $\bar\delta^0(D')\ge \frac k2$.
\end{lemma}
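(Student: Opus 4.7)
The plan is to reduce the problem to a standard statement about bipartite undirected graphs by treating each vertex's out-semidegree and in-semidegree as independent degree slots. Form an auxiliary bipartite graph $B$ with parts $V^{\text{out}}$ and $V^{\text{in}}$, each a disjoint copy of $V(D)$, and edges $u^{\text{out}}v^{\text{in}}$ for every arc $uv$ of $D$. Then $v(B) \leq 2|V(D)|$ and $e(B) = e(D) > (k-1)|V(D)|$. Subgraphs $B' \subseteq B$ correspond naturally to subdigraphs $D' \subseteq D$, and under this correspondence $\delta^+_{D'}(v) = \deg_{B'}(v^{\text{out}})$ and $\delta^-_{D'}(v) = \deg_{B'}(v^{\text{in}})$. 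Consequently, finding the desired $D'$ with $\bar\delta^0(D') \geq k/2$ is equivalent to finding a nonempty $B' \subseteq B$ in which every remaining vertex has degree at least $k/2$.

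To produce such $B'$, I would iteratively delete from $B$ any vertex whose current degree is strictly less than $k/2$, stopping when no such vertex remains. This is the classical "average-degree implies high-min-degree subgraph" argument. Each deletion removes at most $\lceil k/2 \rceil - 1$ edges from $B$, and there are at most $v(B) \leq 2|V(D)|$ deletions in total. Hence the number of edges removed is bounded by
\[
2|V(D)|\bigl(\lceil k/2 \rceil - 1\bigr) \leq (k-1)|V(D)|,
\]
the worst case being $k$ odd (when $k$ is even the bound is $(k-2)|V(D)|$). Since $e(B) > (k-1)|V(D)|$ strictly, at least one edge survives the process, so the terminal $B'$ is nonempty; every vertex remaining in $B'$ has degree at least $k/2$ by construction. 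Taking the corresponding subdigraph $D' \subseteq D$ gives $\bar\delta^0(D') \geq k/2$.

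I do not expect a serious obstacle. The only mild subtlety, and the reason for passing to $B$, is that attempting to iteratively clean up $D$ directly is awkward: zeroing $\delta^+(v)$ by removing out-arcs of $v$ simultaneously reduces in-degrees of other vertices, so the two semidegree conditions interact. The bipartite split decouples them into independent degree slots, and it is exactly this decoupling that makes the factor $2|V(D)|$ in the operation count line up with the hypothesis $e(D) > (k-1)|V(D)|$, so that the threshold is tight for the argument to go through.
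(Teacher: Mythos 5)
Your proof is correct and takes essentially the same route as the paper: both pass to the auxiliary bipartite graph $B$ obtained by splitting each vertex into an in-copy and an out-copy (the paper calls this a ``folklore construction''), and both then extract a nonempty subgraph of $B$ of minimum degree at least $k/2$ via the standard vertex-deletion argument, translating back to $D'$. The only cosmetic difference is that the paper phrases the extraction step as ``average degree $>k-1$ implies a nonempty subgraph of minimum degree $\geq k/2$'' and leaves the edge-count to the reader, whereas you carry out the count explicitly (at most $\lceil k/2\rceil-1$ edges lost per deletion over at most $2|V(D)|$ deletions); your accounting is accurate, including the parity check.
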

\begin{proof}
Note that the vertices of $D$ have, on average, in-degree greater than  $k-1$ and out-degree  greater than  $k-1$. Consider the following folklore construction of an auxiliary bipartite graph~$B$ associated to the digraph $D$: first,  divide each vertex $v\in V(D)$ into two vertices $v_{in}$ and $v_{out}$, letting $v_{in}$ be adjacent to all edges ending at $v$, and letting~$v_{out}$ be adjacent to all edges starting at $v$; second, omit all directions on edges. 

Then the average degree of $B$ is greater than   $k-1$, and a standard argument shows that~$B$ has a non-empty subgraph $B'$ of minimum degree at least $\frac{k}2$ (for this, it suffices to successively delete edges of degree at most $\frac{k-1}2$ and to calculate that we have not deleted the entire graph). 
Translating $B'$ back to the digraph setting, we see that $D$ has a subdigraph~$D'$, such that $D'$ minimum pseudo-semidegree at least~$\frac{k}2$, which is as desired.  
 \end{proof}

The next auxiliary result states that the minimum semidegree and the minimum pseudo-semidegree are practically equivalent for the purposes of finding antidirected subgraphs $A$ in oriented graphs $D$, if there is some control over the placement of $A$ in~$D$. 

Say an oriented graph is {\it weakly connected} if its underlying graph is connected. Recall that the notation $\subseteq_\gamma$ is given  Definition~\ref{gamma-emb}.

\begin{lemma}\label{antilemma} For any weakly connected antidirected graph $A$ and for any
$\ell, n_0\in \mathbb N$, the following holds.
If for each oriented graph $D$ on at least $n_0$ vertices with $\delta^0(D)\ge \ell$ we have $A\subseteq_{1/8} D$, then  each oriented graph $D'$ on at least~$n_0$ vertices with $\bar\delta^0(D')\ge \ell$ contains $A$. \end{lemma}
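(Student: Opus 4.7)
The plan is to build an auxiliary oriented graph $D''$ from $D'$ with $\delta^0(D'')\ge\ell$ and $|V(D'')|\ge n_0$, use the hypothesis to embed $A$ into $D''$ while pinning one vertex of $A$ to a carefully chosen set $V^*$, and then project this embedding back to a genuine embedding $A\hookrightarrow D'$ via a quotient map $\pi$.

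To construct $D''$, first discard any isolated vertices of $D'$ (they cannot appear in an embedding of $A$, assuming $A$ has at least one edge; the single-vertex case is trivial). Then take four disjoint copies $D_1,D_2,D_3,D_4$ of $D'$, where $D_1,D_2$ keep the original orientation and $D_3,D_4$ have every edge reversed, and write $v_i$ for the copy of $v\in V(D')$ in $D_i$. For each \emph{source} $v$ of $D'$ (so $\delta^-_{D'}(v)=0$ and $\delta^+_{D'}(v)\ge\ell$) identify $v_1\sim v_3$ and $v_2\sim v_4$; for each \emph{sink} $v$ identify $v_1\sim v_4$ and $v_2\sim v_3$; for all other vertices leave the four copies distinct. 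These identifications furnish sources with the missing in-edges and sinks with the missing out-edges from the reversed copies, so $\delta^0(D'')\ge\ell$, while $|V(D'')|\ge|V(D')|\ge n_0$. Verifying that $D''$ is an oriented graph reduces to an edge-by-edge case analysis: the only potentially problematic edges are those going from a source to a sink of $D'$, and the asymmetric pairing (sources on indices $\{1,3\}$ and $\{2,4\}$; sinks on $\{1,4\}$ and $\{2,3\}$) ensures that the four resulting edges in $D''$ run between four distinct pairs of equivalence classes, creating no $2$-cycle.

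Since $A$ is weakly connected, antidirected, and has at least one edge, it has both sources and sinks; by possibly reversing all orientations simultaneously in $A$ and $D'$ I may fix a source $x\in\vout(A)$. Let $\pi:V(D'')\to V(D')$ send each class $[v_i]\mapsto v$, and set $V^*:=\{[v_1]:v\in V(D')\text{ is not a sink}\}$. A direct count using $|V(D'')|=4|V(D')|-2(s+t)$, where $s,t$ are the numbers of sources and sinks of $D'$, shows that $|V^*|=|V(D')|-t$ and that at least one of $V^*$ and its sink-analogue has $\ge|V(D'')|/8$ elements; after the possible global reversal I assume $|V^*|\ge|V(D'')|/8$. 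By hypothesis I obtain an embedding $\phi:A\to D''$ with $\phi(x)\in V^*$.

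The heart of the proof is an induction on distance from $x$ in $A$, showing that every $\phi(z)$ is a class of the form $[v_1^{(z)}]$ and that every edge of $\phi(A)$ is a $D_1$-edge. The antidirected structure of $A$ is essential here: along any walk from $x$, vertices alternate between $\vout(A)$ and $\vin(A)$. If $z\in\vout(A)$ with $\phi(z)=[v_1]$ and $v$ non-sink, then every out-edge of $[v_1]$ in $D''$ actually lies in $D_1$, because either $[v_1]=\{v_1\}$, or $[v_1]=\{v_1,v_3\}$ with $v$ a source, in which case $v_3$ has no out-edges in the reversed $D_3$ (as $v$ has no in-edges in $D'$). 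Hence each edge $zz'\in E(A)$ gives a $D_1$-edge $v_1\to w_1$, forcing $\phi(z')=[w_1]$ with $w$ not a source; and a symmetric check at $\phi(z')$ (noting $w_4$ has no in-edges in $D_4$ when $w$ is a sink) shows the in-edges of $\phi(z')$ used by $\phi$ all come from $D_1$, which closes the induction. Since $\pi$ is injective on $D_1$-classes (there is exactly one such class per $v\in V(D')$) and maps $D_1$-edges to $D'$-edges preserving orientation, $\pi\circ\phi$ is the sought embedding of $A$ into $D'$.

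The main obstacle is the $2$-cycle verification for $D''$: a naive two-copy construction (one original, one reversed, with $v_1\sim v_3$ for every source and sink) creates a $2$-cycle from every source-to-sink edge of $D'$, so the four-copy construction with asymmetric pairing between source- and sink-type identifications is the cleanest way I see to simultaneously enforce $\delta^0(D'')\ge\ell$ and preserve the oriented-graph property, and carrying out this case analysis for all edge types of $D'$ is the most delicate bookkeeping in the proof.
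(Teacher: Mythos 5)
Your proof is essentially the paper's own proof, differing only in cosmetic choices. The paper also builds the auxiliary oriented graph from four copies of $D'$, identifies sources and sinks across copies in a four-cycle pattern, reverses the orientations in two of the copies so that identified vertices gain the missing in- or out-degree, takes $V^*$ to be the non-sink classes in the first copy, and then propagates along an antipath in $A$ (using the alternating source/sink structure) to show that an embedding with the pinned vertex in $V^*$ cannot leave $D_1$. Your relabelling (reversing copies $3,4$ and pairing sources as $\{1,3\},\{2,4\}$, sinks as $\{1,4\},\{2,3\}$) is isomorphic to the paper's (reversing copies $2,4$, sources $\{1,2\},\{3,4\}$, sinks $\{1,4\},\{2,3\}$): both realize the same four-cycle of identifications with alternating orientations. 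Two points where you are more careful than the paper: you explicitly discard isolated vertices (without this, an isolated vertex of $D'$ collapses to a single isolated vertex of $D''$, so $\delta^0(D'')$ would actually be $0$, which the paper glosses over), and you explicitly verify that $D''$ has no $2$-cycles, whereas the paper merely asserts this. One small point to tidy: after discarding isolated vertices of $D'$ you still need $|V(D'')|\ge n_0$, which your proposal asserts but does not justify; the paper has the same gap, and in the intended application one gets a lower bound on $|V(D')|$ from $\bar\delta^0(D')\ge\ell$, but it is worth stating.
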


\begin{proof}
Given $A$, $\ell$ and $n_0$, let 
 $D'$ be  an oriented graph  on at least $n_0$ vertices with $\bar \delta^0(D')\ge \ell$. Assume that $|\vout(D')|\ge |\vin(D')|$ (the other case is analogous). Then $$|D'\setminus \vin(D')|\ge |D'\setminus \vout(D')|.$$

\begin{figure}[H]
    \centering
    \includegraphics[scale=.8]{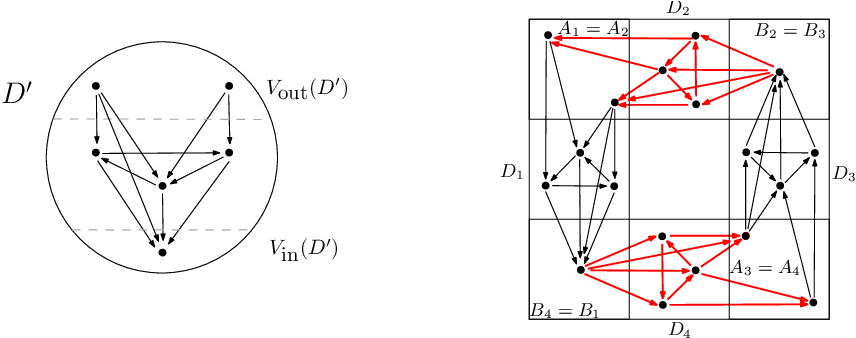}
    \caption{Construction of $D$ from $D'$ in the proof of Lemma~\ref{antilemma}.}
    \label{fig:aux}
\end{figure}

 We will construct an auxiliary oriented graph $D$ to which the hypothesis of the lemma can be applied.
Take four copies $D_1, D_2, D_3, D_4$ of $D'$.  Let $A_i=\vout(D_i)$ and $B_i=\vin(D_i)$ for $i=1,\ldots 4$. For $i=1,3$, identify the vertices of $A_i$ with the vertices of $A_{i+1}$, and the vertices of $B_i$ with the vertices of $B_{i-1}$ (addition modulo $4$). Finally, reverse all directions on edges in copies $D_2$ and  $D_4$. For an illustration, see Figure\ref{fig:aux}.

Note that the obtained digraph $D$ is an oriented graph, and has more than $n_0$ vertices.
Also note that $\delta^0(D)\ge \ell$, since all vertices in $D_i\setminus (A_i\cup B_i)$ maintain their semidegree,  the vertices in $A_i=A_{i+1}$ have  at least $\ell$ out-neighbours in $D_i$ and  at least $\ell$ in-neighbours in $D_{i+1}$, for $i=1,3$, and a similar statement holds for the vertices in the sets $B_i$.

We choose $V^*$ as the set of all vertices in $V(D_1)\setminus B_1$.  Note that $|V^*|\ge \frac {|V(D_1)|}2$ and therefore $|V^*|\ge \frac {|V(D)|}8$. We let $xx'$ be any edge of $A$ (directed from $x$ to $x'$). By the hypothesis of the lemma, we find an embedding of $A$  into $D$, with $x$ embedded into some vertex $v^*\in V^*$. 

We claim  that 
\begin{equation}\label{AinD1}
\text{all vertices of $A$ are embedded into $V(D_1)$.}
\end{equation}

Then, 
as $D_1$ is a copy of $D'$ (with the original directions on the edges), we can conclude that $A$ is a subdigraph of $D'$.

So all that remains is to prove~\eqref{AinD1}. 
Proceeding by contradiction, we assume otherwise, that is, we assume there is a vertex $z\in V(A)$ whose image does not lie in $V(D_1)$. Then the underlying graph of $A$, which is connected by assumption, contains a path from~$x$ to $z$. So $A$, which is antidirected, contains an antipath $xy_1y_2\ldots y_hz$ from $x$ to $z$. Since $xx'$ is an edge of $A$, we know that the edge  $xy_1$ is directed  from~$x$ to $y_1$. 

Recall that $x$ is embedded in $v^*\in V^*=V(D_1)\setminus B_1$. This location of  $v^*$ ensures that  all out-neighbours of  $v^*$ are in $D_1$. More specifically, by the definition of $A_1$, all out-neighbours of  $v^*$ are in 
$D_1\setminus A_1$. In particular, $y_1$ is embedded in a vertex $w_1$ of $D_1\setminus A_1$. This in turn ensures that  all in-neighbours of  $w_1$ are in $D_1\setminus B_1$. In particular, $y_2$ is embedded in a vertex $w_2$ of $D_1\setminus B_1$. Continuing to argue in this manner, we see that all $y_i$ and also $z$ are embedded in $V(D_1)$. 
This proves~\eqref{AinD1}, and thus completes the proof of the lemma.
\end{proof}

\begin{remark}
We remark that Lemma~\ref{antilemma} also holds if both $D$ and $D'$ are directed graphs instead of oriented graphs. However, since  Theorem~\ref{teo:teo1'} only holds for oriented graphs of high minimum semidegree, we need the lemma in the form it is written.
\end{remark}
\begin{remark}
Observe that in Lemma~\ref{antilemma} it is essential to require that a vertex of $A$ can be mapped to a specific set in $D$: If we left out this requirement, then we might find a copy of $A$ in $D_2$ instead of in $D_1$. Thus in $D'$ we would only get a a copy of~$A$ with all directions reversed.
\end{remark}
We are ready to prove Theorem~\ref{ES}.

\begin{proof}[Proof of Theorem~\ref{ES}]
Given $\eta$ and $c$ from Theorem~\ref{ES}, we note that we may assume that $\eta\le 1/8$. Let $n_0$ be given by Theorem \ref{teo:teo1'} for input $\eta$ and $c$. 

Now, let $n\ge n_0$ and $k\ge \eta n$, and  let $D$ be  an oriented graph  on $n$ vertices with $|E(D)|> (1+\eta)(k-1)|V(D)|$. Let $T$ be a balanced antitree with $k$ edges and $\Delta(T)\le (\log n)^c$. We need to show that $T\subseteq D$.

Apply Lemma~\ref{pseu} to $D$ to obtain an oriented graph $D'\subseteq D$ with $\bar \delta^0(D)\ge (1+\eta)\frac k2$. Use Lemma~\ref{antilemma} and Theorem~\ref{teo:teo1'} to see that $T$ can be embedded in $D'$. As $D'\subseteq D$, we see that $T\subseteq D$.
\end{proof}

\section{Conclusion}\label{conclusion}

\paragraph{Oriented trees in digraphs.}
Any digraph $D$ with a minimum semidegree of at least $k$  contains every orientation of every tree with $k$ edges, by a greedy embedding argument. This bound cannot be lowered, not even if we are only looking for oriented paths, as  the disjoint union of complete digraphs of order $k$ has minimum semidegree $k-1$ but no $k$-edge oriented path. Perhaps an additional condition, for instance on the maximum semidegree of~$D$, or requiring $D$ to be weakly connected and sufficiently large, combined with a lower bound on the semidegree of~$D$ could give a variant of Corollary~\ref{coro1} for digraphs.

\paragraph{Oriented trees in oriented graphs.}
In the introduction, we saw the example of a $k$-edge star with all edges directed outwards, which is not contained in, for instance, the $(k-1)$-blow-up of the directed triangle, although this graph has minimum semidegree $k-1$.
We  overcame this difficulty in Theorem~\ref{teo:teo1} by concentrating on balanced antitrees. Another possibility, which was suggested in~\cite{survey}, could be to add an extra condition on the oriented graph $D$, for instance a condition on the maximum semidegree of $D$, with the hope of guaranteeing all $k$-edge antitrees, or even all $k$-edge oriented trees. Such a condition has been successfully used in the undirected graph setting
(see e.g.~\cite{alpha}).

\paragraph{Oriented subdivisions.}
Perhaps Theorem~\ref{teo:teo3} extends to other orientations of subdivisions of the complete graph. For instance, one could ask whether there
is a function $f(h)$ such that every oriented graph of minimum semidegree at least $f(h)$ contains an orientation of a  subdivision of $K_h$, where each path changes directions only a bounded number of times. This would be implied by Conjecture~\ref{conj3}.

\section{Acknowledgment}
The first author would like to thank Matthias Kriesell for pointing out Conjecture~\ref{mader}, and suggesting it might be possible to prove  a result along the lines of Theorem~\ref{teo:teo3}.

\end{document}